\documentclass{amsart}

\usepackage[utf8]{inputenc}
\usepackage{amsmath}
\usepackage{amssymb}
\usepackage{amsthm}
\usepackage[style=alphabetic,maxbibnames=8,safeinputenc,backend=biber]{biblatex}

\usepackage[hyperfootnotes=false]{hyperref}
\usepackage{enumerate}
\usepackage{graphicx}
\usepackage{tikz}
\usepackage{tikz-cd}
\usetikzlibrary{decorations.pathreplacing,calc,tikzmark}
\usepackage[labelfont=bf]{caption}
\usepackage{accents}
\usepackage{microtype}

\usepackage{todonotes}
\usepackage{fancyhdr}

\pagestyle{fancy}
\fancyhf{}
\fancyhead[CO]{\sc\small Relatively dominated representations: gaps and maps}
\fancyhead[CE]{\sc\small F. Zhu}
\fancyhead[LE,RO]{\small\thepage}


\newcommand{\real}{\mathbb{R}}
\newcommand{\ints}{\mathbb{Z}}

\newcommand{\nats}{\mathbb{N}}

\newcommand{\proj}{\mathbf{P}}
\newcommand{\Gr}{\mathrm{Gr}}

\newcommand{\Isom}{\mathrm{Isom}}
\DeclareMathOperator{\PGL}{PGL}
\DeclareMathOperator{\GL}{GL}
\DeclareMathOperator{\SL}{SL}

\DeclareMathOperator{\SO}{SO}

\newcommand{\eps}{\epsilon}
\newcommand{\del}{\partial}
\newcommand{\into}{\hookrightarrow}

\DeclareMathOperator{\id}{id}

\DeclareMathOperator{\Cay}{Cay}
\DeclareMathOperator{\Stab}{Stab}

\DeclareMathOperator{\Ad}{Ad}

\newcommand{\ubar}[1]{\underaccent{\bar}{#1}}

\theoremstyle{plain}
\newtheorem{thm}{Theorem}[section]
\newtheorem{lem}[thm]{Lemma}
\newtheorem{prop}[thm]{Proposition}
\newtheorem{cor}[thm]{Corollary}
\newtheorem{defn}[thm]{Definition}

\newtheorem{introthm}{Theorem}

\newtheorem{introprop}[introthm]{Proposition}

\theoremstyle{definition}
\newtheorem{eg}[thm]{Example}
\newtheorem{rmk}[thm]{Remark}
\newtheorem*{claim}{Claim}

\title{Relatively dominated representations from eigenvalue gaps and limit maps}
\author{Feng Zhu}

\addbibresource{main.bib}

\begin{document}

\begin{abstract}
Relatively dominated representations give a common generalization of geometrically finiteness in rank one on the one hand, and the Anosov condition which serves as a higher-rank analogue of convex cocompactness on the other. This note proves three results about these representations.

Firstly, we remove the technical assumption of quadratic gaps involved in the original definition. Secondly, we give a characterization using eigenvalue gaps, providing a relative analogue of a result of Kassel--Potrie for Anosov representations. Thirdly, we formulate characterizations in terms of singular value or eigenvalue gaps combined with limit maps, in the spirit of Gu\'eritaud--Guichard--Kassel--Wienhard for Anosov representations, and use them to show that inclusion representations of certain groups playing weak ping-pong are relatively dominated.
\end{abstract}

\maketitle

\section{Introduction}
Relatively dominated representations were introduced in \cite{reldomreps} and provide a common generalization of geometric finiteness in rank-one semisimple Lie groups and the Anosov condition in more general semisimple Lie groups. They are related to earlier common generalizations studied by Kapovich and Leeb in \cite{KL}. These representations furnish a class of discrete relatively hyperbolic subgroups of semisimple Lie groups which are quasi-isometrically embedded modulo controlled distortion along their peripheral subgroups.

The definition of these representations is given in terms of singular value gaps, which may be interpreted in terms of the geometry of the associated symmetric spaces as distances from singular flats of specified type. The corresponding characterization of Anosov representations was given first by Kapovich--Leeb--Porti in \cite{KLP} under the name of URU subgroups, and subsequently reformulated, in language more closely resembling that used here, by Bochi--Potrie--Sambarino in \cite{BPS}. 

The key defining condition asserts that the singular value gap $\frac{\sigma_1}{\sigma_2}(\rho(\gamma))$ grows uniformly exponentially in a notion of word-length $|\gamma|_c$ that has been modified to take into account the distortion along the peripheral subgroups.

The definition also involves additional technical conditions to control the images of the peripheral subgroups. In the first part of this note, we remove one of those technical conditions, by showing that its relevant consequences also follow from other parts of the definition. We refer the reader to \S\ref{sec:D+-EC}, and specifically Proposition \ref{prop:D+-_EC}, for the full statement; here we present it slightly summarised as follows:
\begin{introprop} \label{introprop:3.5}
Let $\Gamma$ be a finitely-generated group, $\mathcal{P}$ a collection of finitely-generated subgroups such that we can build a cusped space $X(\Gamma,\mathcal{P})$ and obtain a cusped word-length $d_c$, and fix $C_0 > 0$. Suppose we have a representation $\rho\colon \Gamma \to \SL(d,\real)$ such that for all $\gamma \in \Gamma$,
$$C_0^{-1} \log \frac{\sigma_1}{\sigma_2}(\rho(\gamma)) - C_0 \leq |\gamma|_c \leq C_0 \log \frac{\sigma_1}{\sigma_2}(\rho(\gamma)) + C_0 .$$

Then, given constants $\ubar\upsilon, \bar\upsilon > 0$, there exists constants $C, \mu > 0$ such that for any bi-infinite sequence of elements $(\gamma_n)_{n \in \ints} \subset \Gamma$ satisfying 
\begin{enumerate}[(i)]
    \item $\gamma_0 = \id$, and
    \item $\ubar\upsilon^{-1}|n| -\ubar\upsilon \leq |\gamma_n|_c \leq \bar\upsilon|n| + \bar\upsilon$ for all $n$,
\end{enumerate}
and any $k \in \ints$, 
$$d \left( U_1(\rho(\gamma_{k-1} \cdots \gamma_{k-n})), U_1(\rho(\gamma_{k-1} \cdots \gamma_{k-n-1})) \right) < Ce^{-\mu n} $$
for all $n >0$.
\end{introprop}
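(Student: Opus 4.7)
Set $g_n := \rho(\gamma_{k-1}\gamma_{k-2}\cdots\gamma_{k-n})$ and $h_n := \rho(\gamma_{k-n-1})$, so that $g_{n+1} = g_n h_n$ and the target inequality reads $d(U_1(g_n h_n), U_1(g_n)) \leq Ce^{-\mu n}$. The plan is to invoke the now-standard linear-algebra toolkit for top singular directions, in the spirit of Bochi--Potrie--Sambarino: namely a perturbation estimate
$$ d\bigl(U_1(AB), U_1(A)\bigr) \;\lesssim\; \frac{\sigma_2(A)/\sigma_1(A)}{\sin\angle(V^-(A), U_1(B))}, $$
together with the companion almost-multiplicativity bound
$$ \log(\sigma_1/\sigma_2)(AB) \;\geq\; \log(\sigma_1/\sigma_2)(A) + \log(\sigma_1/\sigma_2)(B) + 2\log\sin\angle(V^-(A), U_1(B)) - O(1), $$
where $V^-(A) \in \proj((\real^d)^*)$ denotes the hyperplane that $A$ contracts most strongly. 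Applied with $A = g_n$ and $B = h_n$, these reduce the problem to the simultaneous tasks \textbf{(a)} exponential growth of $(\sigma_1/\sigma_2)(g_n)$ in $n$, and \textbf{(b)} a uniform positive lower bound on $\sin\angle(V^-(g_n), U_1(h_n))$.

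For (a), the upper half of the two-sided hypothesis converts exponential gap growth into a linear-in-$n$ lower bound on $|\gamma_{k-1}\cdots\gamma_{k-n}|_c$. Subadditivity of $|\cdot|_c$ combined with condition (ii) yields the matching upper bound immediately, but the lower bound is the crux. My approach would be a bootstrap: condition (ii) together with the upper part of the two-sided hypothesis applied to each factor gives $\log(\sigma_1/\sigma_2)(\rho(\gamma_{k-j})) \geq C_0^{-1}(\ubar\upsilon^{-1}|k-j| - \ubar\upsilon - C_0)$, and summing these via the almost-multiplicativity estimate (using the angle control from (b)) yields a linear-in-$n$ lower bound on $\log(\sigma_1/\sigma_2)(g_n)$; the lower half of the hypothesis, applied now to $g_n$ itself, then produces the desired linear lower bound on $|g_n|_c$. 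For (b), the angle is forced away from zero by the hypothesis itself: a near-degenerate angle at step $n$ would, via the almost-multiplicativity estimate, force $(\sigma_1/\sigma_2)(g_{n+1})$ to be substantially smaller than its predicted value, which via the two-sided hypothesis would force $|g_{n+1}|_c$ to be substantially smaller than the value one would expect from extending the length-linear sequence by one more step --- a contradiction.

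The main obstacle, as the preceding outline already reveals, is running the simultaneous induction (a)+(b) without circularity. In the Anosov setting this is relatively straightforward because the incremental factors lie in a fixed compact set, so the transversality estimate in (b) is uniform by continuity; here, however, the increments $h_n$ arise from group elements of $c$-length growing linearly in $n$, hence potentially unbounded operator norm, and so the transversality bound must be extracted from the structural two-sided hypothesis rather than from compactness. I would expect the resolution to involve carrying along both a linear-in-$n$ gap hypothesis and an angle lower bound as simultaneous inductive invariants, with the closing step at each $n$ being a careful comparison between the $c$-length predicted from the two-sided hypothesis and the actual length of $g_{n+1}$, exploiting the bi-infinite nature of the sequence to rule out degeneracies at either end.
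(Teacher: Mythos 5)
Your proposal follows the Bochi--Potrie--Sambarino linear-algebraic route: a perturbation estimate for $U_1$, almost-multiplicativity of singular-value gaps, and a simultaneous induction on gap growth and incremental transversality. The paper's proof takes a genuinely different path, built on the Kapovich--Leeb--Porti geometric framework: from (D$\pm$) one deduces uniform $(P_1,C_r,c_r)$-regularity of the whole image, then invokes the higher-rank Morse lemma to place the orbit points $A_{k-1}\cdots A_{k-n}\cdot o$ within a uniform distance $C_a$ of a Weyl cone over the limit flag, and finally applies an estimate of Riestenberg converting ``linear radial growth plus Weyl-cone proximity'' into exponential decay of the relevant angle in the visual boundary. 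The Lipschitz fibration $\pi$ from $(P_1,C_r)$-regular ideal points to $\proj(\real^d)$ then transfers the angle decay to the desired $U_1$-convergence. No incremental angle lower bound of the form $\sin\angle(V^-(g_n),U_1(h_n))\geq\alpha_0$ is ever needed.

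The gap in your proposal is exactly the circularity you flag, and the resolution you sketch does not close. The almost-multiplicativity estimate gives
\[ 2\log\sin\alpha_n \;\geq\; \log\tfrac{\sigma_1}{\sigma_2}(g_{n+1}) - \log\tfrac{\sigma_1}{\sigma_2}(g_n) - \log\tfrac{\sigma_1}{\sigma_2}(h_n), \]
and feeding in the two-sided hypothesis together with the metric-quasigeodesic bounds on the three right-hand terms yields a lower bound of the rough form $\bigl[C_0^{-1}\ubar\upsilon^{-1} - C_0\bar\upsilon\bigr]n + (\text{depth terms}) + O(1)$. This is not bounded below unless the constants happen to satisfy something like $C_0^{2}\ubar\upsilon\bar\upsilon\leq 1$, which is not implied by the hypotheses. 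Worse, the increments $h_n$ can be deep peripheral steps whose word-length grows like $2^{\delta}$, so their cusped length --- and hence $\log\tfrac{\sigma_1}{\sigma_d}(h_n)$, which also enters your perturbation estimate --- grows linearly in the depth and is not uniformly controlled. This is precisely the phenomenon the original ``quadratic gaps'' hypothesis was introduced to tame, and the entire point of this proposition is to remove that hypothesis; the KLP/Morse-lemma route succeeds because it extracts the Weyl-cone proximity bound $C_a$ directly from uniform regularity and the coarse quasigeodesic property, bypassing any incremental angle bookkeeping. Your identification of the obstacle is accurate, but the bootstrap you propose reproduces the very difficulty the paper's proof was designed to circumvent.
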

Here 
$U_1(B)$ denotes the image of the 1-dimensional subspace of $\real^d$ most expanded by $B$. This proposition allows us to obtain uniform convergence towards limit points; the exponential convergence seen here is reminiscent of phenomena from hyperbolic dynamics, and is straightforward to obtain in the non-relative case.

In the proof of Proposition \ref{prop:D+-_EC} we will find it useful to adopt elements of the point of view of Kapovich--Leeb--Porti, which emphasizes the geometry of the symmetric space and the related geometry of its boundary and associated flag spaces. 

More recently, Kassel--Potrie \cite{KasselPotrie} have given a characterization of Anosov representations in terms of eigenvalue gaps $\frac{\lambda_i}{\lambda_{i+1}}$, which may be interpreted as asymptotic versions of singular value gaps $\frac{\sigma_i}{\sigma_{i+1}}$, i.e.\ distance to the Weyl chamber walls at infinity. In the second part of this note, we give an analogous characterization of relatively dominated representations:
\begin{introthm}[Corollary \ref{cor:reldom_eiggap}] \label{thm:intro_reldom_eiggap}
Let $\Gamma$ be hyperbolic relative to $\mathcal{P}$.
A semisimple representation $\rho\colon \Gamma \to \SL(d,\real)$ is $P_1$-dominated relative to $\mathcal{P}$ if and only if the following four conditions hold:
\begin{itemize}
\item (${D}^\lambda_-$) there exist constants $\ubar{C}, \ubar\mu > 0$ such that
$$\frac{\lambda_1}{\lambda_2}(\rho(\gamma)) \geq \ubar{C} e^{\ubar\mu |\gamma|_{c,\infty}}$$
for all $\gamma \in \Gamma$, 
\item (${D}^\lambda_+$) there exist constants $\bar{C}, \bar\mu > 0$ such that
$$\frac{\lambda_1}{\lambda_d}(\rho(\gamma)) \leq \bar{C} e^{\bar\mu |\gamma|_{c,\infty}}$$
for all $\gamma \in \Gamma$,
\item (unique limits) for each $P \in \mathcal{P}$, there exists $\xi_\rho(P) \in \proj(\real^d)$ and  $\xi^*_\rho(P) \in \Gr_{d-1}(\real^d)$ such that for every sequence $(\eta_n) \subset P$ with $\eta_n \to \infty$, we have $\lim_{n\to\infty} U_1(\rho(\eta_n)) = \xi_\rho(P)$ and $\lim_{n\to\infty} U_{d-1}(\rho(\eta_n)) = \xi^*_\rho(P)$.

\item (uniform transversality) 
for every $P, P' \in \mathcal{P}$ and $\gamma \in \Gamma$, $\xi_\rho(P) \neq \xi_\rho(\gamma P'\gamma^{-1})$. 
Moreover,
for every $\ubar\upsilon,\bar\upsilon>0$, 
there exists $\delta_0 > 0$ 
such that for all $P, P' \in \mathcal{P}$ and $g, h \in \Gamma$
such that there exists a bi-infinite $(\ubar\upsilon,\bar\upsilon)$-metric quasigeodesic path $\eta gh \eta'$ where 
$\eta'$ is in $P'$ and $\eta$ is in $P$, 
we have 
\[ \sin \angle (g^{-1} \xi_\rho(P), h\, \xi^*_\rho(P')) > \delta_0 .\]
{\footnotesize (See Definition \ref{defn:metric_proj_qgeod} for the precise definition of a metric quasigeodesic path.)}
\end{itemize} \end{introthm}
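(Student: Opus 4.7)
The plan is to prove the two directions separately, with the forward implication being relatively direct and the reverse forming the substantive content.

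For the forward direction, assume $\rho$ is $P_1$-dominated relative to $\mathcal{P}$. Since $\sigma_1(\rho(\gamma))/\sigma_2(\rho(\gamma))$ grows uniformly exponentially in $|\gamma|_c$, applying this to powers $\gamma^n$ and using the standard limit $\log(\lambda_1/\lambda_2)(\rho(g)) = \lim_n \tfrac{1}{n} \log(\sigma_1/\sigma_2)(\rho(g^n))$ together with the limiting relation $|g|_{c,\infty} = \lim_n |g^n|_c/n$ yields $D^\lambda_-$; the bound $D^\lambda_+$ follows similarly from the corresponding upper singular value bound $D_+$ in the definition. Unique limits and uniform transversality follow from the existence and transversality of the $\rho$-equivariant limit map from the Bowditch boundary $\partial(\Gamma,\mathcal{P})$ (known from \cite{reldomreps}), combined with the uniform exponential convergence provided by Proposition \ref{prop:D+-_EC}.

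For the reverse direction, I would follow the template of Kassel--Potrie \cite{KasselPotrie} and Bochi--Potrie--Sambarino \cite{BPS}, adapted to the relative setting. The main task is to establish $D_-$: namely $\log(\sigma_1/\sigma_2)(\rho(\gamma)) \geq c|\gamma|_c - c'$. The strategy is to fix a cusped quasi-geodesic $\id = g_0, g_1, \ldots, g_n = \gamma$ realizing $|\gamma|_c$ and decompose it into alternating ``thick'' segments of uniformly bounded length and ``peripheral'' segments lying in a single $P \in \mathcal{P}$. On peripheral segments, the unique limits condition places $U_1$ of the corresponding product close to $\xi_\rho(P)$ and $U_{d-1}$ close to $\xi^*_\rho(P)$, so the product behaves like a rank-one contraction toward $\xi_\rho(P)$ along $\xi^*_\rho(P)$. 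On thick segments, $D^\lambda_-$ applied to suitable ``bracketing'' conjugacy classes yields a singular value gap as in \cite{KasselPotrie,BPS}. Uniform transversality then guarantees that at each interface the expanding and contracting directions are uniformly separated, so the rank-one contractions compound multiplicatively and the singular gaps sum to produce $D_-$. The companion bound $D_+$ follows from $D^\lambda_+$ together with the control on singular value growth implicit in the unique limits condition on peripheral subgroups. With $D_\pm$ in hand, Proposition \ref{prop:D+-_EC} closes the argument by recovering the full relatively dominated property.

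The main obstacle is the quantitative bookkeeping in the compounding step. The unique limits condition is merely a sequential convergence statement, so extracting a uniform contraction rate requires an equicontinuity or compactness argument, leveraging the uniform transversality constant $\delta_0$ to convert pointwise convergence into uniform estimates valid for all $P \in \mathcal{P}$ and all sufficiently long peripheral products. A secondary challenge is the combinatorial construction of bracketing elements on each thick segment: these must simultaneously have bounded cusped length, lie in conjugacy classes to which $D^\lambda_-$ applies with uniform constants, and respect the transversality constraints at the interfaces.
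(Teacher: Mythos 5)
Your forward direction is essentially the same as the paper's: the limiting argument $\log(\lambda_1/\lambda_2) = \lim_n \frac{1}{n}\log(\sigma_1/\sigma_2)(\rho(\gamma^n))$ combined with $|\gamma|_{c,\infty} = \lim_n |\gamma^n|_c/n$ gives (D$^\lambda_\pm$) from (D$\pm$), and the remaining two conditions are shared verbatim with the definition, so there is nothing to check.

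For the reverse direction you take a genuinely different, and in my view under-specified, route. The paper's key structural insight is that the equivalence (D$\pm$) $\Leftrightarrow$ (D$^\lambda_\pm$) holds on its own for semisimple representations of a relatively hyperbolic group (Theorem \ref{thm:reldom_eiggap}), with no reference whatsoever to unique limits or uniform transversality; the corollary is then a one-line swap. The mechanism is Tsouvalas's property~U for groups with non-trivial Floyd boundary, upgraded via Lemma \ref{lem:adapt_T53} to control peripheral excursions of powers of $f\gamma$, together with the comparison $|\log\lambda_i(\rho(\gamma f')) - \log\sigma_i(\rho(\gamma))| \leq C$ for $f'$ ranging over a finite set, which requires semisimplicity. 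Your proposal instead tries to re-derive (D$-$) by a BPS/GGKW-style decomposition of a cusped quasigeodesic into thick and peripheral blocks, invoking unique limits on the peripheral parts and ``bracketing conjugacy classes'' on the thick parts. Two concrete gaps: first, the ``bracketing'' step --- finding, for each thick segment, a conjugate with $|\cdot|_{c,\infty}$ comparable to the segment's cusped length and uniform regularity constants --- is precisely where property~U lives, and you have not identified that tool nor why it holds relatively; this is not bookkeeping but the substantive content. Second, you nowhere invoke semisimplicity of $\rho$, yet relating eigenvalue gaps to singular value gaps without it fails (consider a non-semisimple representation whose image lies in a parabolic). The compounding step also understates the difficulty: the unique-limits condition is purely sequential, and converting it into a uniform contraction estimate usable across all peripheral cosets needs a separate compactness argument that the paper sidesteps entirely by decoupling the gap conditions from the peripheral ones.
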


Here $|\gamma|_{c,\infty}$ is a stable version of the modified word-length $|\gamma|_c$; we refer the reader to \S\ref{sub:stablen_grop} for the precise definitions.
Note there is an additional semisimplicity assumption here; there are additional subtleties that arise in the relative case which make a more unqualified statement tricky. We recall that a representation into $\SL(d,\real)$ is called semisimple if the Zariski closure of its image is a reductive group. Equivalently, semisimple representations may be written as direct sums of irreducible representations.

The proof of Theorem \ref{thm:intro_reldom_eiggap} leverages a recent result of Tsouvalas \cite[ Th.\,5.3]{Kostas_AnosovWEG} stating that groups admitting non-trivial Floyd boundaries have property U: this property, roughly speaking, allows us to control stable translation lengths in terms of word-length. Relatively hyperbolic groups admit non-trivial Floyd boundaries (\cite{GerFloyd}, see also Remark \ref{rmk:floyd_relhyp}), and here we establish a modified version of property U adapted to the relatively hyperbolic case. 

Finally, we present characterizations of relatively dominated representations which replace most of the additional conditions on the peripheral images with conditions about the existence of suitable limit maps. These are relative analogues of results due to Gu\'eritaud--Guichard--Kassel--Wienhard \cite{GGKW}. 

\begin{introthm} [Theorem \ref{thm:gaps+maps} + Corollaries \ref{thm:eiggaps+maps_a} \& \ref{thm:eiggaps+maps_b}] \label{thm:intro_gaps+maps}
Given $(\Gamma,\mathcal{P})$ a relatively hyperbolic group, a representation $\rho\colon \Gamma \to \SL(d,\real)$ is $P_1$-dominated relative to $\mathcal{P}$ if and only if
\begin{itemize}
\item there exist continuous, $\rho(\Gamma)$-equivariant, transverse, dynamics-preserving limit maps 
$\xi_\rho\colon \del(\Gamma,\mathcal{P}) \to \proj(\real^d)$ and $\xi_\rho^*\colon \del(\Gamma,\mathcal{P}) \to \proj(\real^{d*})$,
\end{itemize}
and one of the following sets of conditions holds:
\begin{itemize}
\item \emph{either} there exist constants $\ubar{C}, \ubar\mu > 0$ and $\bar{C},\bar\mu > 0$ such that 
\begin{itemize}
    \item[(D$-$)] 
    $\frac{\sigma_1}{\sigma_{2}}(\rho(\gamma)) \geq \ubar{C} e^{\ubar\mu|\gamma|_c}$ for all $\gamma \in \Gamma$, and
    \item[(D+)] 
    $\frac{\sigma_1}{\sigma_d}(\rho(\gamma)) \leq \bar{C} e^{\bar\mu|\eta|_c}$ for all $\gamma \in \Gamma$; 
\end{itemize} 
\item \emph{or} there exist constants $\ubar{C}, \ubar\mu > 0$ and $\bar{C},\bar\mu > 0$ such that 
\begin{itemize}
\item[(D${}^\lambda_-$)]
$\frac{\lambda_1}{\lambda_2}(\rho(\gamma)) \geq \ubar{C} e^{\ubar\mu |\gamma|_{c,\infty}}$
for all $\gamma \in \Gamma$, and
\item[(D${}^\lambda_+$)]
$\frac{\lambda_1}{\lambda_d}(\rho(\gamma)) \leq \bar{C} e^{\bar\mu |\gamma|_{c,\infty}}$
for all $\gamma \in \Gamma$.
\end{itemize}
\emph{and} at least one of the following holds: 
\begin{itemize} 
\item $\rho$ is semisimple, or
\item $\xi_\rho$ is a homeomorphism onto 
$$
\Lambda_1(\rho) := \bigcap_{n \geq \ell_0} \overline{\{U_1(\rho(\gamma)) : |\gamma|_c \geq n\}}
$$
{\footnotesize where $\ell_0 \in \ints_{>0}$ is large enough so that $U_1(\rho(\gamma))$ is well-defined whenever $|\gamma|_c \geq \ell_0$.}
\end{itemize} 
\end{itemize}
\end{introthm}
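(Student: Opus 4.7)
The plan is to prove the stated theorem as a three-way equivalence among (i) $\rho$ being $P_1$-dominated relative to $\mathcal{P}$, (ii) the limit map conditions together with (D$_-$)+(D$_+$), and (iii) the limit map conditions together with (D${}^\lambda_-$)+(D${}^\lambda_+$). The gap conditions in (ii) are part of the definition of a relatively dominated representation, and Theorem \ref{thm:intro_reldom_eiggap} provides the eigenvalue gaps for (iii), so on the forward side the substantive content is the construction of the limit maps. I would build $\xi_\rho$ in two pieces on the Bowditch boundary $\partial(\Gamma,\mathcal{P})$: at conical limit points, Proposition \ref{introprop:3.5} gives uniform exponential convergence of $U_1(\rho(\gamma_{k-1}\cdots\gamma_{k-n}))$ along a metric quasigeodesic representative, and at parabolic fixed points the unique-limits clause of Theorem \ref{thm:intro_reldom_eiggap} supplies the value; continuity across the boundary follows from comparing the two constructions on approach to parabolic points. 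The dual map $\xi_\rho^*$ is built analogously using the contragredient representation, and transversality is inherited from the uniform transversality clause of Theorem \ref{thm:intro_reldom_eiggap}.

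For each reverse implication the plan is to reduce to the hypotheses of Theorem \ref{thm:intro_reldom_eiggap}, which is to verify unique limits and uniform transversality on peripherals; the gap conditions come for free from either set of hypotheses. The unique-limits condition is immediate from continuity and equivariance of $\xi_\rho, \xi_\rho^*$ on $\partial(\Gamma,\mathcal{P})$: given $P\in\mathcal{P}$ and $\eta_n\in P$ with $\eta_n\to\infty$, the sequence converges in the Bowditch boundary to the parabolic fixed point $p_P$; the gap hypothesis forces $U_1(\rho(\eta_n))$ to have its accumulation set concentrated in a single direction, which continuity of $\xi_\rho$ identifies with $\xi_\rho(p_P)$, and likewise for $U_{d-1}(\rho(\eta_n))$ via $\xi_\rho^*$. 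The equivalence (ii) $\Leftrightarrow$ (iii) can be obtained either from this common reduction or more directly by combining $\log\lambda_i(g) = \lim_n \frac{1}{n}\log\sigma_i(g^n)$ with the comparison of $|\gamma^n|_c$ to $n|\gamma|_{c,\infty}$ on one side, and the eigenvalue-gap argument internal to Theorem \ref{thm:intro_reldom_eiggap} on the other.

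The main obstacle I anticipate is uniform transversality. The statement quantifies over every pair of peripherals $P,P'$ and every pair $g,h\in\Gamma$ joined by a bi-infinite $(\ubar\upsilon, \bar\upsilon)$-metric quasigeodesic $\eta g h \eta'$ with $\eta\subset P$ and $\eta'\subset P'$; neither $g$ nor $h$ is constrained in length. The natural route is a contradiction argument: given a putative sequence of configurations along which $\sin\angle(g_n^{-1}\xi_\rho(P_n), h_n\xi_\rho^*(P_n'))\to 0$, extract subsequential limits of the two endpoints of the metric quasigeodesics in the compact Bowditch boundary, transport $g_n^{-1}\xi_\rho(P_n)$ and $h_n\xi_\rho^*(P_n')$ to limits via continuity and equivariance of the limit maps, and contradict the pointwise transversality of $\xi_\rho$ against $\xi_\rho^*$. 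The crux is to show that metric quasigeodesics in the relative Cayley graph of $(\Gamma,\mathcal{P})$ shadow geodesic rays to well-defined endpoints in $\partial(\Gamma,\mathcal{P})$, and that the translates $g_n^{-1}\xi_\rho(P_n)$ and $h_n\xi_\rho^*(P_n')$ can be matched to the values of the limit maps at those endpoints in a uniform way; this is where the relatively hyperbolic structure of $(\Gamma,\mathcal{P})$ enters the argument most essentially.
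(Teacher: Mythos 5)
Your proposal follows essentially the same route as the paper: the forward direction rests on constructing the limit maps (which the paper cites from \cite[Th.\,7.2]{reldomreps}, and which you sketch in the same spirit), the reverse direction reduces to verifying unique limits and uniform transversality so that Definition \ref{defn:reldomrep} / Corollary \ref{cor:reldom_eiggap} applies, with uniform transversality handled by a compactness-and-contradiction argument in the Bowditch boundary (this is what \cite[Prop.\,8.5]{reldomreps} does), and the passage between (D$\pm$) and (D$^\lambda_\pm$) is delegated to Theorem \ref{thm:reldom_eiggap} exactly as the paper delegates it. The only caveat worth noting is that your sketch of unique limits and of continuity of the limit map across parabolic points elides the substance that the cited results supply, but the overall decomposition and the key ideas match the paper's.
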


Here, $\xi$ and $\xi^*$ are said to be {\bf transverse} if $\xi(x) \oplus \xi^*(y) = \real^d$ for all $x \neq y$, 
and they are said to be {\bf dynamics-preserving} if 
\begin{enumerate}[(i)]
\item $\xi(\gamma^+) = (\rho(\gamma))^+$ and $\xi^*(\gamma^+)^\perp = (\rho^*(\gamma))^+$.
for all nonperipheral $\gamma \in \Gamma$, where $\gamma^+ := \lim_{n\to\infty} \gamma^n \in \del(\Gamma,\mathcal{P})$ and $\rho(\gamma)^+$ is the attracting eigenline for $\rho(\gamma)$,
and
\item If $\del P \in \del(\Gamma,P)$ is the unique point associated to $P \in \mathcal{P}$, then $\xi^{(*)}(\del P)$ is the parabolic fixed point in $\proj(\real^{d(*)})$ associated to $\rho^{(*)}(P)$ (where $\rho^*\colon \Gamma \to \SL(d,\real)$ is the dual representation defined by $\rho^*(\gamma) := (\rho(\gamma^{-1}))^T$). In particular, these fixed points exist and are well-defined.
\end{enumerate}

As an application of this, we show that certain free groups which contain unipotent generators and which play weak ping-pong in projective space are relatively $P_1$-dominated (Example \ref{eg:proj_schottky} below).

\subsection*{Organization} 
\S\ref{sec:prelim} collects the various preliminaries needed. \S\ref{sec:D+-EC} gives the definition of a relatively dominated representation, with the simplification allowed by Proposition \ref{introprop:3.5} / \ref{prop:D+-_EC}. \S\ref{sec:reldom_eiggap} contains the proof of the eigenvalue gaps + peripheral conditions characterization described in Theorem \ref{thm:intro_reldom_eiggap}, and \S\ref{sec:gaps+maps} contains the proofs of the gaps + limit maps characterizations described in Theorem \ref{thm:intro_gaps+maps}, as well as their application to weak ping-pong groups. Note the only dependence of \S\ref{sec:reldom_eiggap} and \S\ref{sec:gaps+maps} on \S\ref{sec:D+-EC} is in the definition of relatively dominated representations.

\subsection*{Acknowledgements}
The author thanks Max Riestenberg for helpful conversations about the Kapovich--Leeb--Porti approach to Anosov representations, Kostas Tsouvalas for stimulating comments, Fran\c{c}ois Gu\'eritaud and Jean-Philippe Burelle for helpful discussions related to ping-pong and positive representations, Andrew Zimmer for pointing out a missing hypothesis and a gap in a reference, and Fanny Kassel for comments on an earlier version.
The author acknowledges support from ISF grants 871/17 and 737/20.

This research was conducted during the COVID-19 pandemic. The author extends his heartfelt gratitude to all those --- friends, family, mentors, funding agencies --- who have given him safe harbor in these tumultuous times.

\section{Preliminaries} \label{sec:prelim}

\subsection{Relatively hyperbolic groups and cusped spaces}

Relative hyperbolicity is a group-theoretic notion
of non-positive curvature inspired by the geometry of cusped hyperbolic manifolds and free products.  

Consider a finite-volume cusped hyperbolic manifold with an open neighborhood of each cusp removed: call the resulting truncated manifold $M$. The universal cover $\tilde{M}$ of such a $M$ is hyperbolic space with a countable set of horoballs removed. The universal cover $\tilde{M}$ is not Gromov-hyperbolic; distances along horospheres that bound removed horoballs are distorted. If we glue the removed horoballs back in to the universal cover, however, the resulting space will again be hyperbolic space.

Gromov generalized this in \cite[\S8.6]{Gromov} by defining a group $\Gamma$ as hyperbolic relative to a conjugation-invariant collection of subgroups $\mathcal{P}$ if $(\Gamma,\mathcal{P})$ admits a {\bf cusp-uniform action} on a (Gromov-)hyperbolic metric space $X$, meaning there exists some system $(\mathcal{H}_P)_{P\in\mathcal{P}}$ of disjoint horoballs of $X$, each preserved by a subgroup $P \in \mathcal{P}$, such that the $\Gamma$ acts on $X$ discretely and isometrically, and the $\Gamma$ action on $X \smallsetminus \bigcup_P \mathcal{H}_P$ is cocompact. 

The hyperbolic space $X$ is sometimes called a Gromov model for $(\Gamma,\mathcal{P})$. There is in general no canonical Gromov model for a given relatively hyperbolic group, but there are systematic constructions one can give, one of which we describe here. The description below, as well as the material in the next section \S\ref{sub:hat_unhat}, is taken from \cite[\S2]{reldomreps} and is based on prior literature, in particular \cite{GrovesManning}; it is included here for completeness.

\begin{defn}[{\cite[Def.\,3.1]{GrovesManning}}] \label{defn:combhoroball}
Given a subgraph $\Lambda$ of the Cayley graph $\Cay(\Gamma,S)$, the {\bf combinatorial horoball} based on $\Lambda$, denoted $\mathcal{H} = \mathcal{H}(\Lambda)$, is the 1-complex\footnote{Groves-Manning combinatorial horoballs are actually defined as 2-complexes; the definition here is really of a 1-skeleton of a Groves-Manning horoball. For metric purposes only the 1-skeleton matters.} formed as follows:
\begin{itemize}
\item the vertex set $\mathcal{H}^{(0)}$ is given by $\Lambda^{(0)} \times \ints_{\geq 0}$
\item the edge set $\mathcal{H}^{(1)}$ consists of the following two types of edges: \begin{enumerate}[(1)]
\item if $k \geq 0$ and $v$ and $w \in \Lambda^{(0)}$ are such that $0 < d_\Lambda(v, w) \leq 2^k$, then there is a (``horizontal'') edge connecting
$(v, k)$ to $(w, k)$; 
\item if $k \geq 0$ and $v \in \Lambda^{(0)}$, there is a (``vertical'') edge joining $(v, k)$ to $(v, k + 1)$.
\end{enumerate} \end{itemize}
$\mathcal{H}$ is metrized by assigning length 1 to all edges.
\end{defn}

Next let $\mathcal{P}$ be a finite collection of finitely-generated subgroups of $\Gamma$, and suppose $S$ is a {\bf compatible generating set}, i.e.\ for each $P \in \mathcal{P}$, $S \cap P$ generates $P$. 

\begin{defn}[{\cite[Def.\,3.12]{GrovesManning}}] \label{defn:cuspedspace}
Given $\Gamma, \mathcal{P}, S$ as above, the {\bf cusped space} $X(\Gamma, \mathcal{P}, S)$ is the simplicial metric graph 
\[ \Cay(\Gamma,S) \cup \bigcup \mathcal{H}(\gamma P) \]
where the union is taken over all left cosets of elements of $\mathcal{P}$, i.e.\ over $P \in \mathcal{P}$ and (for each $P$) $\gamma P$ in a collection of representatives for left cosets of $P$. 

Here the induced subgraph of $\mathcal{H}(\gamma P)$ on the $\gamma P \times \{0\}$ vertices is identified with (the induced subgraph of) $\gamma P \subset \Cay(\Gamma,S)$ in the natural way.
\end{defn}

\begin{defn} \label{defn:relhyp}
$\Gamma$ is hyperbolic relative to $\mathcal{P}$ if and only if the cusped space $X(\Gamma,\mathcal{P},S)$ is hyperbolic (for any compatible generating set $S$; the hyperbolicity constant may depend on $S$.) 

We will also call $(\Gamma, \mathcal{P})$ a {\bf relatively hyperbolic structure}.
\end{defn}

We remark that for a fixed relatively hyperbolic structure $(\Gamma, \mathcal{P})$, any two cusped spaces, corresponding to different compatible generating sets $S$, are quasi-isometric \cite[Cor.\,6.7]{Groff}:
in particular, the notion above is well-defined independent of the choice of generating set $S$.
There is a natural action of $\Gamma$ on the cusped space $X = X(\Gamma,\mathcal{P},S)$;
with respect to this action, the quasi-isometry between two cusped spaces $X(\Gamma,\mathcal{P},S_i)$ ($i=1,2$) is $\Gamma$-equivariant. 


In particular, this gives us a notion of a boundary associated to the data of a relatively hyperbolic group $\Gamma$ and its peripheral subgroups $\mathcal{P}$:
\begin{defn} \label{defn:bowditch_bdy}
For  $\Gamma$ hyperbolic relative to $\mathcal{P}$, the {\bf Bowditch boundary} $\del (\Gamma, \mathcal{P})$ is defined as the Gromov boundary $\del_\infty X$ of any cusped space $X = X(\Gamma,\mathcal{P},S)$.
\end{defn}
This boundary is well-defined up to homeomorphism, independent of the choice of compatible generating set $S$ \cite[\S9]{Bowditch}.

Below, with a fixed choice of $\Gamma$, $\mathcal{P}$ and $S$ as above, for $\gamma, \gamma' \in \Gamma$, $d(\gamma, \gamma')$ will denote the distance between $\gamma$ and $\gamma'$ in the Cayley graph with the word metric, and $|\gamma| := d(\id, \gamma)$ denotes word length in this metric. Similarly, $d_c(\gamma, \gamma')$ denotes distance in the corresponding cusped space and $|\gamma|_c := d_c(\id,\gamma)$ denotes cusped word-length.

\subsection{Geodesics in the cusped space} \label{sub:hat_unhat}

Let $\Gamma$ be a finitely-generated group, $\mathcal{P}$ be a malnormal finite collection of finitely-generated subgroups, and let $S = S^{-1}$ be a compatible finite generating set as above. Let $X = X(\Gamma, \mathcal{P}, S)$ be the cusped space, and $\Cay(\Gamma) = \Cay(\Gamma,S)$ the Cayley graph. Here we collect some technical results about geodesics in these spaces that will be useful below. 

\begin{lem}[{\cite[Lem.\,3.10]{GrovesManning}}] \label{lem:gm310}
Let $\mathcal{H}(\Gamma)$ be a combinatorial horoball. Suppose that $x,y \in \mathcal{H}(\Gamma)$ are distinct vertices. Then there is a geodesic $\gamma(x,y) = \gamma(y,x)$ between $x$ and $y$ which consists of at most two vertical segments and a single horizontal segment of length at most 3. 
\end{lem}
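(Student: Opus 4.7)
The plan is to exhibit an explicit candidate path of the prescribed shape and verify it realizes the $\mathcal{H}$-distance. Without loss of generality, write $x = (v, n)$ and $y = (w, m)$ with $n \leq m$; if $v = w$ the vertical segment from $x$ to $y$ already has the desired form, so assume $v \neq w$ and set $D := d_\Lambda(v, w) \geq 1$. Let $k^*$ be the smallest integer with $k^* \geq m$ and $\lceil D/2^{k^*}\rceil \leq 3$ (such a $k^*$ exists since $\lceil D/2^k\rceil \to 1$ as $k\to\infty$). Define $\gamma$ as the concatenation of the vertical segment from $(v, n)$ up to $(v, k^*)$, a horizontal segment of length $\lceil D/2^{k^*}\rceil \leq 3$ from $(v, k^*)$ to $(w, k^*)$ (obtained by subdividing a length-$D$ path in $\Lambda$ into at most three pieces of $\Lambda$-length at most $2^{k^*}$), and the vertical segment from $(w, k^*)$ down to $(w, m)$. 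By construction, this $\gamma$ has the claimed structure.

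To show $\gamma$ is a geodesic, I would compare it to an arbitrary edge-path $p$ from $x$ to $y$. Let $h_k$ denote the number of horizontal edges of $p$ at level $k$ and $V$ its total vertical-edge count. Projecting $p$ onto $\Lambda$ yields $\sum_k 2^k h_k \geq D$, since each horizontal edge at level $k$ covers $\Lambda$-distance at most $2^k$. Tracking level changes gives $V \geq (K - n) + (K - m)$, where $K$ is the maximum level reached by $p$ (necessarily $K\geq m$ since $p$ ends at level $m$). Combining these,
\[
|p| = \textstyle\sum_k h_k + V \;\geq\; (2K - n - m) + \min\Bigl\{\textstyle\sum_{k \leq K} h_k \,:\, \sum_{k \leq K} 2^k h_k \geq D\Bigr\}.
\]
The bound $\sum_{k\leq K} 2^k h_k \leq 2^K \sum_{k\leq K} h_k$ shows the inner minimum equals $\lceil D/2^K\rceil$, achieved by placing all horizontal mass at level $K$. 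Minimizing $f(K) := (2K - n - m) + \lceil D/2^K\rceil$ over integers $K \geq m$ will then yield $|p| \geq f(k^*) = |\gamma|$.

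The main obstacle is the final optimization. One computes $f(K+1) - f(K) = 2 + \lceil D/2^{K+1}\rceil - \lceil D/2^K\rceil$ and checks by cases on $\lceil D/2^K\rceil \in \{1,2,\ldots\}$ that this difference is $\geq 0$ precisely when $\lceil D/2^K\rceil \leq 3$ and $\leq 0$ otherwise; hence the minimum of $f$ on $\{K\geq m\}$ is attained at $k^*$. This simultaneously gives geodesicity of $\gamma$ and forces the bound of $3$ on its horizontal length: at level $k^* - 1$ (when $k^*-1\geq m$) one would need $\geq 4$ horizontal edges, which can be exchanged for $\geq 2$ edges at level $k^*$ plus two extra vertical edges at no cost in length, but this exchange is what makes $k^*$ optimal in the first place. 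The candidate $\gamma$ is therefore a geodesic of the required form.
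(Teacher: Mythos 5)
The paper itself only cites this result from Groves--Manning without reproving it, so there is no in-paper argument to compare against; I'll assess your proof on its own terms. Your direct-optimization strategy --- exhibit an explicit candidate of the right shape, then prove a matching lower bound by a level-counting argument --- is valid, and it differs from the argument in the cited source, which proceeds by a sequence of local ``shuffling'' moves (replacing two horizontal edges at level $k$ with one at level $k{+}1$ plus two vertical edges, etc.) that transform an arbitrary geodesic into one of the stated normal form without increasing length. Yours has the virtue of producing the geodesic and the length formula $2k^* - n - m + \lceil D/2^{k^*}\rceil$ in one pass; the shuffling argument is perhaps more robust if one later needs to normalize an \emph{arbitrary} geodesic rather than just find one.

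One concrete slip: the assertion that $f(K+1) - f(K) \geq 0$ \emph{precisely} when $\lceil D/2^K\rceil \leq 3$ (and $\leq 0$ otherwise) is not what the case check gives. Write $a := \lceil D/2^K\rceil$. If $a = 4$ then $3\cdot 2^K < D \leq 4\cdot 2^K$, so $\lceil D/2^{K+1}\rceil = 2$ and the difference is $2 + 2 - 4 = 0$; if $a = 5$ then $\lceil D/2^{K+1}\rceil = 3$ and the difference is $2 + 3 - 5 = 0$. So the difference is $\geq 0$ for all $a \leq 5$, $\leq 0$ for all $a \geq 4$, and strictly negative only once $a \geq 6$. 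Your conclusion survives: since $a(K)$ is nonincreasing in $K$, the sequence $f(K)$ (on $K \geq m$) strictly decreases while $a \geq 6$, is constant while $a \in \{4,5\}$, and strictly increases once $a \leq 3$; hence the minimum is attained at $k^*$ (possibly among a plateau of equally-optimal levels) and $f(k^*) = |\gamma|$. But the ``precisely when'' dichotomy as you wrote it is false, and since the whole optimization hinges on it you should state the correct trichotomy and note explicitly why the plateau does not lower the minimum below $f(k^*)$. The remaining ingredients --- the vertical bound $V \geq (K-n)+(K-m)$ via the first and last visits to level $K$, the projection bound $\sum_k 2^k h_k \geq D$ via the triangle inequality in $\Lambda$, and evaluating the inner minimum as $\lceil D/2^K\rceil$ --- are all correct.
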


We will call any such geodesic a {\bf preferred geodesic}.



Given a path $\gamma\colon I \to \Cay(\Gamma)$ in the Cayley graph 
such that $\gamma(I \cap \ints) \subset \Gamma$, we can consider $\gamma$ as a {\bf relative path}
$(\gamma, H)$, where $H$ is a subset of $I$ consisting of a disjoint union of finitely many subintervals $H_1, \dots, H_n$  occurring in this order along $I$, such that each $\eta_i := \gamma|_{H_i}$ is a maximal subpath lying in a closed combinatorial horoball $B_i$, 
and $\gamma|_{I \smallsetminus H}$ contains no edges of $\Cay(\Gamma)$ labelled by a peripheral generator.

Similarly, a path $\hat\gamma\colon \hat{I} \to X$ in the cusped space with endpoints in $\Cay(\Gamma) \subset X$ may be considered as a relative path $(\hat\gamma, \hat{H})$, where $\hat{H} = \coprod_{i=1}^n \hat{H}_i$, $\hat{H}_1, \dots, \hat{H}_n$ occur in this order along $\hat{I}$, each $\hat\eta_i := \hat\gamma|_{\hat{H}_i}$ is a maximal subpath in a closed combinatorial horoball $B_i$, and $\hat\gamma|_{\hat{I} \smallsetminus \hat{H}}$ lies inside the Cayley graph. Below, we will consider only geodesics and quasigeodesic paths $\hat\gamma\colon \hat{I} \to X$ where all of the $\hat\eta_i$ are preferred geodesics (in the sense of Lemma \ref{lem:gm310}.)

We will refer to the $\eta_i$ and $\hat\eta_i$ as {\bf peripheral excursions}. We remark that the $\eta_i$, or any other subpath of $\gamma$ in the Cayley graph, may be considered as a word and hence a group element in $\Gamma$; this will be used without further comment below.

Given a path $\hat\gamma\colon \hat{I} \to X$ whose peripheral excursions are all preferred geodesics, we may replace each excursion $\hat\eta_i = \hat\gamma|_{\hat{H}_i}$ into a combinatorial horoball with a geodesic path (or, more precisely, a path with geodesic image) $\eta_i = \pi \circ \hat\eta_i$ in the Cayley (sub)graph of the corresponding peripheral subgroup connecting the same endpoints, by omitting the vertical segments of the preferred geodesic $\hat\eta_i$ and replacing the horizontal segment with the corresponding segment at level 0, i.e.\ in the Cayley graph.\footnote{As a parametrized path this has constant image on the subintervals of $\hat{H}_i$ corresponding to the vertical segments, and travels along the projected horizontal segment at constant speed.} We call this the ``project'' operation, since it involves ``projecting'' paths inside combinatorial horoballs onto the boundaries of those horoballs. This produces a path $\gamma = \pi\circ\hat\gamma\colon \hat{I} \to \Cay(\Gamma)$. 


Given any path $\alpha$ in the Cayley graph with endpoints $g, h \in \Gamma$, we write $\ell(\alpha)$ to denote $d(g,h)$, i.e.\ distance measured according to the word metric in $\Cay(\Gamma)$.

We have the following biLipschitz equivalence between cusped distances and suitably-modified distances in the Cayley graph:
\begin{prop}[{\cite[Prop.\ 2.12]{reldomreps}}] \label{prop:unhat_distance}
Given a geodesic $\hat\gamma\colon \hat{J} \to X$ with endpoints in $\Cay(\Gamma) \subset X$ and whose peripheral excursions are all preferred geodesics, let $\gamma = \pi \circ \hat\gamma\colon \hat{J} \to \Cay(\Gamma)$ be its projected image. 

Given any subinterval $[a,b] \subset \hat{J}$, consider the subpath $\gamma|_{[a,b]}$ as a relative path $(\gamma|_{[a,b]}, H)$ where $H = (H_1, \dots, H_n)$, and write $\eta_i := \gamma|_{H_i}$; then we have 
\[ \frac13 \leq \frac{d_c(\gamma(a), \gamma(b))}{\ell(\gamma|_{[a,b]}) - \sum_{i=1}^n \ell(\eta_i) + \sum_{i=1}^n \hat\ell(\eta_i)} \leq \frac{2}{\log 2} + 1 < 4 \]
where $\hat\ell(\eta_i) := \max\{\log(\ell(\eta_i)), 1\}$.
\end{prop}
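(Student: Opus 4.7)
The plan is to compare numerator and denominator term-by-term over the non-peripheral and peripheral portions of $\gamma|_{[a,b]}$. Writing $\ell(\gamma|_{[a,b]}) = \ell_0 + \sum_{i=1}^n \ell(\eta_i)$, where $\ell_0$ is the total non-peripheral Cayley length, the denominator becomes $\ell_0 + \sum_i \hat\ell(\eta_i)$. The path in the cusped space obtained by following $\gamma|_{[a,b]}$ but replacing each peripheral excursion $\eta_i$ with the preferred geodesic $\hat\eta_i$ (of cusped length $c_i$) exhibits $d_c(\gamma(a), \gamma(b)) \leq \ell_0 + \sum_i c_i$. For the reverse direction, $\hat\gamma|_{[a,b]}$ is itself a cusped geodesic between its endpoints whose length decomposes as $\ell_0 + \sum_i c_i$ (plus partial contributions at the ends), and applying the triangle inequality with the endpoint projections gives $d_c(\gamma(a),\gamma(b)) \geq \ell_0 + \sum_i c_i$ modulo an absorbable error. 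It then remains to compare $c_i$ with $\hat\ell(\eta_i) = \max\{\log \ell(\eta_i), 1\}$ on a single-excursion basis.

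By Lemma \ref{lem:gm310}, the preferred geodesic joining two level-$0$ vertices at horizontal distance $L := \ell(\eta_i)$ in a combinatorial horoball has the form ``vertical up to some level $k$, at most three horizontal edges, vertical down'', where $k$ is the smallest non-negative integer with $3 \cdot 2^k \geq L$. Hence $c_i = 2k + h$ with $h \in \{1, 2, 3\}$, and a direct computation, with a short case analysis for $L \leq 3$, yields
$$\hat\ell(\eta_i) \,\leq\, c_i \,\leq\, \bigl(\tfrac{2}{\log 2} + 1\bigr)\hat\ell(\eta_i).$$
The non-peripheral contributions enter both sides with coefficient $1 \in [\tfrac13, \tfrac{2}{\log 2}+1]$, so summing the per-piece bounds delivers the two-sided estimate --- with constant $1$ on the lower side in the ``clean'' case where neither $a$ nor $b$ lies strictly inside a peripheral excursion.

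The main subtlety --- and the source of the slack $\tfrac13$ rather than $1$ on the lower side --- is the endpoint bookkeeping. When $[a,b]$ begins or ends partway inside a peripheral excursion, $\hat\gamma(a)$ sits at some positive level in a horoball while $\gamma(a)$ lies on its level-$0$ boundary. The peripheral excursion that $\gamma|_{[a,b]}$ witnesses near $a$ can then degenerate into a constant subpath of Cayley length $0$, contributing $\hat\ell(0) = 1$ to the denominator but nothing to the numerator. Since there are at most two such partial excursions, the denominator exceeds $\ell_0 + \sum^{\text{full}}_i \hat\ell(\eta_i)$ by at most $2$, which combined with $d_c(\gamma(a),\gamma(b)) \geq \ell_0 + \sum^{\text{full}}_i c_i$ gives $d_c \geq \text{(denominator)} - 2$; the ratio is therefore $\geq \tfrac13$ as soon as $d_c \geq 1$, and the residual degenerate cases (e.g.\ $[a,b]$ entirely inside a single vertical segment) are handled directly. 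The constant $\tfrac13$ is in fact sharp, realized when $\hat\gamma(a)$ and $\hat\gamma(b)$ lie in two distinct adjacent horoballs linked by a single non-peripheral Cayley edge.
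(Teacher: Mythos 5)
Your upper-bound argument is in good shape: replacing each $\eta_i$ by the preferred geodesic in its horoball gives a path from $\gamma(a)$ to $\gamma(b)$ of length $\ell_0 + \sum_i c_i$, and the per-excursion estimate $c_i \le (\tfrac{2}{\log 2}+1)\hat\ell(\eta_i)$ (with $c_i = 2k+h$, $h\le 3$) is routine and correct. Note though that the statement in this paper is imported from \cite[Prop.\ 2.12]{reldomreps}; the present paper gives no proof to compare against, so I am evaluating your argument on its own.

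The lower bound is where your sketch has real gaps. First, the triangle inequality through the lifts gives
$d_c(\gamma(a),\gamma(b)) \ge d_c(\hat\gamma(a),\hat\gamma(b)) - h_a - h_b = (b-a) - h_a - h_b$,
and $h_a, h_b$ are the levels of $\hat\gamma(a), \hat\gamma(b)$ in their horoballs. These are not ``absorbable'': if $a$ sits near the top of the ascending vertical segment of a long excursion of Cayley length $L$, then $h_a \approx \log_2 L$, and the lower bound you extract is $O(1)$ while the corresponding denominator term $\hat\ell(\eta_1) = \log L$ is large. In that same configuration $\gamma(a)$ is the \emph{entry} vertex of the excursion, so $\eta_1$ is \emph{not} degenerate — $\ell(\eta_1) = L$ — which directly contradicts your claim that partial excursions contribute at most $2$ to the denominator over $\ell_0 + \sum^{\mathrm{full}}\hat\ell(\eta_i)$. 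The estimate still holds in this case, but for a different reason: $\gamma(a)$ is at level $0$, so any path from $\gamma(a)$ to $\gamma(b)$ must cross horoball $1$ from entry to exit, and that crossing already costs $d_{\mathcal H_1}(\gamma(a), w_1) \approx \tfrac{2}{\log 2}\log L \ge \hat\ell(\eta_1)$. Turning ``must cross the horoball'' into a rigorous additive lower bound (not just for the first excursion but summed over all of them) is exactly the missing content; it requires either a convexity/geodesic-penetration argument for Groves–Manning horoballs or the $\delta$-hyperbolicity of $X$, neither of which appears in your sketch. The clean case where $a,b$ are themselves level-$0$ breakpoints is genuinely trivial — there $\hat\gamma|_{[a,b]}$ \emph{is} a geodesic from $\gamma(a)$ to $\gamma(b)$, so $d_c = \ell_0 + \sum c_i \ge \ell_0 + \sum\hat\ell_i$ with constant $1$ — but that observation does not extend to interior points by the bookkeeping you describe.

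One further caution: if $[a,b]$ sits entirely inside a vertical segment of a single horoball, $\gamma|_{[a,b]}$ is constant, the numerator is $0$, and the denominator is $1$, so the stated lower bound fails outright; ``handled directly'' needs to become an explicit hypothesis excluding this (e.g.\ restricting $a,b$ to points where $\hat\gamma$ is at level $0$, which is how the proposition is applied in the paper) or a precise argument for why such intervals are outside the scope of the claim.
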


Below we will occasionally find it useful to consider paths in $\Cay(\Gamma)$ that ``behave metrically like quasi-geodesics in the relative Cayley graph'', in the following sense: 
\begin{defn} \label{defn:projgeod_depth}
Given any path $\gamma\colon I \to \Cay(\Gamma)$ such that $I$ has integer endpoints and $\gamma(I \cap \ints) \subset \Gamma$,
define the {\bf depth} $\delta(n) = \delta_\gamma(n)$ of a point $\gamma(n)$ (for any $n \in I \cap \ints$) as \begin{enumerate}[(a)]
\item the smallest integer $d \geq 0$ such that at least one of $\gamma(n-d)$, $\gamma(n+d)$ is well-defined (i.e.\ $\{n-d, n+d\} \cap I \neq \varnothing$) and not in the same peripheral coset as $\gamma(n)$, {\bf or}
\item if no such integer exists, $\min\{\sup I - n, n - \inf I\}$.
\end{enumerate}
\end{defn}

\begin{defn} \label{defn:metric_proj_qgeod}
Given constants $\ubar\upsilon, \bar\upsilon > 0$, an {\bf $(\ubar\upsilon,\bar\upsilon)$-metric quasigeodesic path} is a path $\gamma\colon I \to \Cay(\Gamma)$ 
with $\gamma(I \cap \ints) \subset \Gamma$
such that
for all integers $m, n \in I$, 
\begin{enumerate}[(i)]
\item $ |\gamma(n)^{-1}\gamma(m)|_c \geq \ubar\upsilon^{-1} |m-n| - \ubar\upsilon$, 
\item $ |\gamma(n)^{-1}\gamma(m)|_c \leq \bar\upsilon(|m-n| + \min\{\delta(m), \delta(n)\} ) + \bar\upsilon$, and
\item if $\gamma(n)^{-1} \gamma(n+1) \in P$ for some $P \in \mathcal{P}$, we have $\gamma(n)^{-1} \gamma(n+1) = p_{n,1} \cdots p_{n,\ell(n)}$ where each $p_{n,i}$ is a peripheral generator of $P$, and $$2^{\delta(n)-1} \leq \ell(n) := |\gamma(n)^{-1}\gamma(n+1)| \leq 2^{\delta(n)+1}.$$ 
\end{enumerate}
\end{defn}

The terminology comes from the following fact: given a geodesic segment $\hat\gamma$ in the cusped space with endpoints in $\Cay(\Gamma)$, we can project the entire segment to the Cayley graph and reparametrize the projected image to be a metric quasigeodesic path --- the idea being that in such a reparametrization, the increments correspond, approximately, to linear increments in cusped distance: see the discussion in \cite[\S2.3]{reldomreps}, and in particular Prop.\ 2.16 there for more details.

\subsection{Floyd boundaries} \label{sub:Floyd_bdy}

Let $\Gamma$ be a finitely-generated group, and $S$ a finite generating set giving a word metric $|\cdot|$. 

A {\bf Floyd boundary} $\del_f \Gamma$ for $\Gamma$ is a boundary for $\Gamma$ meant to generalize the ideal boundary of a Kleinian group. 
Its construction uses the auxiliary data of a {\bf Floyd function}, which is a function $f\colon \nats \to \real_{>0}$ satisfying
\begin{enumerate}[(i)]
\item $\sum_{n=1}^\infty f(n) < \infty$, and
\item there exists $m > 0$ such that $\frac1m \leq \frac{f(k+1)}{f(k)} \leq 1$ for all $k \in \nats$.
\end{enumerate}

Given such a function, there exists a metric $d_f$ on $\Gamma$ defined by setting $d_f(g,h) = f(\max\{|g|, |h|\})$ if $g,h$ are adjacent vertices in $\Cay(\Gamma,S)$, and considering the resulting path metric. Then the Floyd boundary $\del_f \Gamma$ with respect to $f$ is given by 
\[ \del_f \Gamma := \bar\Gamma \smallsetminus \Gamma \]
where $\bar\Gamma$ is the metric completion of $\Gamma$ with respect to the metric $d_f$.

Below, the Floyd boundary, in particular the ability of the Floyd function to serve as a sort of ``distance to infinity'', will be useful as a tool in the proof of Theorem \ref{thm:reldom_eiggap}. It may be possible, with more work, to replace the role of the Floyd boundary in that proof with the Bowditch boundary.

The Floyd boundary $\del_f \Gamma$ is called {\bf non-trivial} if it has at least three points. Gerasimov and Potyagailo have studied Floyd boundaries of relatively hyperbolic groups:
\begin{thm}[\cite{GerFloyd}, \cite{GP_FloydRH}] \label{thm:relhyp_floydbdy}
Suppose we have a non-elementary relatively hyperbolic group $\Gamma$ which is hyperbolic relative to $\mathcal{P}$.

Then there exists a Floyd function $f$ such that $\del_f \Gamma$ is non-trivial, and moreover 
\begin{enumerate}[(a)]
\item there exists a continuous equivariant map $F: \del_f G \to \del(\Gamma,\mathcal{P})$, such that
\item for any parabolic point $p \in \del(\Gamma, \mathcal{P})$, we have $F^{-1}(p) = \del_f(\Stab_\Gamma p)$, and if there exist $a \neq b$ such that $F(a) = F (b) = p$, then $p$ is parabolic. 
\end{enumerate} 
\end{thm}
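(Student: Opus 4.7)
The strategy is to exploit the action of $\Gamma$ on the cusped space $X = X(\Gamma,\mathcal{P},S)$, which is $\delta$-hyperbolic, and compare this action to the intrinsic Floyd metric on $\Gamma$. First, I would choose a Floyd function of the form $f(n) = \lambda^n$ with $\lambda \in (0,1)$ sufficiently close to $1$ (depending on $\delta$ and on the combinatorics of the cusped space), so that the classical Floyd inequality holds: any geodesic in $\Cay(\Gamma,S)$ that stays outside $B(e,R)$ has Floyd length at most $C\lambda^R$. For non-triviality, since $\Gamma$ is non-elementary relatively hyperbolic, it contains two independent loxodromic elements $g,h$ for the action on $X$, with four distinct fixed points in $\del_\infty X$. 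The orbit sequences $(g^{\pm n})$ and $(h^{\pm n})$ are Cauchy with respect to $d_f$ (one verifies this by applying Floyd's inequality to geodesics between $g^n$ and $g^m$, which stay uniformly close to a quasi-axis and hence eventually far from $e$), and they represent distinct classes in the Floyd completion, yielding at least three points of $\del_f \Gamma$.

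Second, I would construct $F$ as follows. Fix a basepoint $x_0 \in X$. Given a $d_f$-Cauchy sequence $(g_n) \subset \Gamma$ representing a point of $\del_f \Gamma$, I would show that the orbit points $g_n x_0$ form a Cauchy sequence in the Gromov compactification of $X$, converging to a point of $\del_\infty X = \del(\Gamma,\mathcal{P})$, and that this limit only depends on the Floyd equivalence class. The translation between the word metric on $\Gamma$ and the cusped metric $d_c$ is supplied by Proposition \ref{prop:unhat_distance}: Floyd-Cauchy means the $\Cay(\Gamma)$-geodesics $[g_n,g_m]$ leave every bounded region around $e$, and the ``project'' operation of \S\ref{sub:hat_unhat} together with the biLipschitz comparison shows that the corresponding cusped geodesics also travel far from $x_0$, forcing Gromov-convergence of $g_n x_0$. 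Equivariance is automatic since $d_f$ is left-invariant and $\Gamma$ acts isometrically on $X$; continuity comes from making the above argument uniform in the Cauchy data.

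The most delicate parts, and where I would expect the bulk of the work to lie, are the fiber structure over parabolic points and the claim that non-singleton fibers occur only at parabolic points. For the inclusion $\del_f(\Stab_\Gamma p) \subset F^{-1}(p)$, a sequence inside $P = \Stab_\Gamma p$ is trapped in a horoball of $X$ based at $p$, so its orbit converges to $p$. For the reverse inclusion, I would argue that a Floyd-Cauchy sequence $(g_n)$ with $F$-limit $p$ must eventually penetrate arbitrarily deep into the horoball at $p$; using that horospheres carry a distorted (roughly logarithmic in $d_c$) word-metric, one shadows $(g_n)$ by a sequence in $P$ that is Floyd-equivalent to it. Finally, at a non-parabolic limit point $q$, one uses the conical approximation property of $\del(\Gamma,\mathcal{P})$ together with Floyd's inequality to show that any two Floyd-Cauchy sequences with $F$-limit $q$ must be $d_f$-equivalent. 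The main obstacle throughout is carefully reconciling the two competing scalings on peripheral cosets --- the exponential Floyd damping based on word length versus the logarithmic compression of cusped distance --- and doing so uniformly enough to obtain continuity of $F$ and the sharp fiber description.
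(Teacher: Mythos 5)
This statement is cited from the literature (\cite{GerFloyd}, \cite{GP_FloydRH}) and the paper offers no proof of its own, so there is no in-paper argument to compare against; what follows is an assessment of your sketch on its own terms. Your broad outline---build $F$ by sending Floyd--Cauchy sequences to their limits in the cusped space, check non-triviality via independent loxodromics, handle parabolic fibers via horoballs---is the right shape. But the central technical difficulty is presupposed rather than addressed. You invoke ``the classical Floyd inequality'' on $\Cay(\Gamma,S)$, asserting that word-metric geodesics avoiding $B(e,R)$ have Floyd length $\leq C\lambda^R$. That inequality is a theorem about geodesics in a Gromov-hyperbolic space, and $\Cay(\Gamma,S)$ is \emph{not} hyperbolic here (that is exactly why one passes to the cusped space). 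Establishing a Floyd-type inequality for a relatively hyperbolic group---in effect producing the Karlsson function and showing that the exponential Floyd damping wins against the exponential distortion inside peripheral cosets for a suitably chosen $f$---is precisely the content of Gerasimov's work, and ``take $\lambda$ sufficiently close to $1$'' does not locate the constraint: the admissible decay rate is tied to the hyperbolicity constant of the \emph{cusped} space and to how fast combinatorial-horoball distances compress.

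The same gap propagates through the rest of your sketch. Showing that a $d_f$-Cauchy sequence $(g_n)$ has $g_n x_0$ converging in $\del_\infty X$ requires transferring ``word-metric geodesics $[g_n,g_m]$ leave every ball'' into ``cusped geodesics between $g_n x_0$ and $g_m x_0$ leave every ball'', and Proposition \ref{prop:unhat_distance} alone does not do this: a $d_f$-small but word-metric-long segment could, for all you have argued, be a long excursion inside one coset of $P$, and you need the quantitative interplay between $f$ and the logarithmic contraction on horoballs to rule out Floyd-convergence failing where cusped-convergence holds (and vice versa). Your fiber argument has the same issue in reverse: the inclusion $F^{-1}(p)\supset \del_f(\Stab_\Gamma p)$ requires knowing that sequences in $P$ diverging in the word metric are actually $d_f$-Cauchy, which again depends on the interaction you flag as ``the main obstacle'' but leave unresolved. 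In short, you have correctly identified where the difficulty lies, but the sketch as written assumes the hard estimate it would need to prove; this is a genuine gap, not a routine verification.
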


\begin{rmk} \label{rmk:floyd_relhyp}
It is an open question whether every group with a non-trivial Floyd boundary is relatively hyperbolic --- see e.g.\ \cite{Ivan_thickFloyd}.
\end{rmk}

For more details, including justifications for some of the assertions above, we refer the reader to \cite{Floyd} and \cite{Karlsson}.

\subsection{Gromov products and translation lengths in hyperbolic spaces} \label{sub:stablen_grop}

We collect here, for the reader's convenience, assorted facts about Gromov products and translation lengths in Gromov-hyperbolic spaces that we use below, in particular in and around the statement and proof of Theorem \ref{thm:reldom_eiggap}.

Given $X$ a proper geodesic metric space, $x_0 \in X$ a fixed basepoint, and $\gamma$ an isometry of $X$, we define the {\bf translation length} of $\gamma$ as 
\[ \ell_X(\gamma) := \inf_{x \in X} d_X(\gamma x, x) \]
and the {\bf stable translation length} of $\gamma$ as 
\[ |\gamma|_{X,\infty} := \lim_{n\to\infty} \frac{d_X(\gamma^n x_0, x_0)}n .\]
When $X$ is $\delta$-hyperbolic space, these two quantities are coarsely equivalent:
\begin{prop}[{\cite[Chap.\ 10, Prop.\ 6.4]{CDP}}] \label{prop:Xhyp_translen_stranslen}
If $X$ is hyperbolic metric space, the quantities $\ell_X(\gamma)$ and $|\gamma|_{X,\infty}$ defined above satisfy
\[ \ell_X(\gamma) - 16\delta \leq |\gamma|_{X,\infty} \leq \ell_X(\gamma) .\]
\end{prop}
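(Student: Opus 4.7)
The plan is to prove the two inequalities separately; the upper bound is formal, while the lower bound is where $\delta$-hyperbolicity is actually used.

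For the existence of the limit defining $|\gamma|_{X,\infty}$ and the upper bound $|\gamma|_{X,\infty} \leq \ell_X(\gamma)$, I would first observe that $a_n := d_X(\gamma^n x_0, x_0)$ is subadditive: since $\gamma^n$ acts by isometries, $a_{m+n} \leq d(\gamma^{m+n} x_0, \gamma^n x_0) + d(\gamma^n x_0, x_0) = a_m + a_n$. Fekete's lemma then gives $\lim_n a_n/n = \inf_n a_n/n$, so the limit exists and is independent of $x_0$ up to the change-of-basepoint error $2 d(x_0, x)/n \to 0$. For the upper bound, fix $\epsilon > 0$ and pick $x$ with $d(\gamma x, x) \leq \ell_X(\gamma) + \epsilon$. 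Iterating the triangle inequality along $x, \gamma x, \gamma^2 x, \dots, \gamma^n x$ and using that $\gamma$ is an isometry gives $d(\gamma^n x, x) \leq n(\ell_X(\gamma) + \epsilon)$, hence $|\gamma|_{X,\infty} \leq \ell_X(\gamma) + \epsilon$; letting $\epsilon \to 0$ finishes it.

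For the lower bound, the strategy is to show that orbits of near-minimizing points are uniform quasi-geodesics in hyperbolic spaces. Pick $x$ with $d(\gamma x, x)$ near $\ell_X(\gamma)$ and consider the Gromov product
\[ (\gamma^{-1} x \mid \gamma x)_x = d(x, \gamma x) - \tfrac{1}{2} d(\gamma^{-1} x, \gamma x), \]
using that $d(x,\gamma^{-1}x) = d(x,\gamma x)$. The key geometric point is that this Gromov product must be at most a constant multiple of $\delta$: if it were large, then $\delta$-thinness of the triangle $(\gamma^{-1}x, x, \gamma x)$ would produce a point $m$ on a geodesic $[\gamma^{-1}x, \gamma x]$ lying close to $x$ but whose $\gamma$-displacement is strictly smaller than $d(x, \gamma x)$, contradicting near-minimality at $x$. (Concretely, one compares $m$ to $\gamma m$, which lies on the $\gamma$-translated geodesic $[x, \gamma^2 x]$, and uses thin triangles to show the $\delta$ fellow-traveling forces a shortcut.) Quantifying this bound and then applying the standard ``no backtracking'' argument for points whose consecutive Gromov products are bounded by $4\delta$ gives $d(x, \gamma^n x) \geq n\,\ell_X(\gamma) - 16n\delta$; since the estimate is linear in $n$ with the defect $16\delta$ per step, the correct formulation actually accumulates the defect only once up to an additive constant, yielding $d(x, \gamma^n x) \geq n(\ell_X(\gamma) - 16\delta) - O(1)$ and hence the claimed inequality after dividing by $n$.

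The main obstacle is quantifying the bound on the Gromov products and propagating it to all iterates with a uniform loss of $16\delta$. Because $\gamma$ acts by isometries, the local thin-triangle picture at each $\gamma^k x$ is a translate of the picture at $x$, so the Gromov-product estimate $(\gamma^{k-1}x \mid \gamma^{k+1}x)_{\gamma^k x} \leq 4\delta$ holds at every $k$ with the same constant; the induction then only loses a bounded additive term rather than a per-step multiplicative one. Carefully tracking the constants in the thin-triangle inequality and the shortcut construction is what yields the specific value $16\delta$ quoted from \cite{CDP}; I would not attempt to optimize this constant in the sketch, as any linear-in-$\delta$ loss suffices for the applications made elsewhere in the paper.
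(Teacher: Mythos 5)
The paper offers no proof of this proposition --- it is a direct citation of \cite[Ch.~10, Prop.~6.4]{CDP} --- so there is no internal argument to compare against. Your sketch is nonetheless a reasonable reconstruction of the standard argument: the upper bound via subadditivity and Fekete's lemma, and the lower bound by bounding the Gromov products $(\gamma^{k-1}x\mid\gamma^{k+1}x)_{\gamma^k x}$ at a near-minimizing point $x$ and then feeding this into a chain/no-backtracking lemma.

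Two corrections to the lower-bound sketch, though. First, the geometric picture is inverted: if $p := (\gamma^{-1}x\mid\gamma x)_x$ were large, then $x$ would lie \emph{far} from the geodesic $[\gamma^{-1}x,\gamma x]$ (one always has $p \le d(x,[\gamma^{-1}x,\gamma x]) \le p+4\delta$ in a $\delta$-hyperbolic space), so there is no point $m$ on that geodesic close to $x$. The shortcut should be found on a side through $x$: take $y$ on $[\gamma^{-1}x,x]$ at distance $p$ from $x$ and $z$ on $[x,\gamma x]$ at distance $p$ from $x$; thinness gives $d(y,z)\le 4\delta$, and since $\gamma y$ lies on $[x,\gamma x]$ at distance $d(x,\gamma x)-p$ from $x$, one gets $d(y,\gamma y)\le 4\delta + d(x,\gamma x)-2p$, which undercuts $\ell_X(\gamma)$ once $p$ exceeds roughly $2\delta$. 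Second, the closing remark that the defect ``accumulates only once,'' supposedly improving $d(x,\gamma^n x)\ge n\ell_X(\gamma)-16n\delta$ to $n(\ell_X(\gamma)-16\delta)-O(1)$, is both wrong and unnecessary: the two expressions differ only by the spurious $O(1)$, and if the defect genuinely accumulated only once (i.e.\ $d(x,\gamma^n x)\ge n\ell_X(\gamma)-O(\delta)$) the proposition would sharpen to $|\gamma|_{X,\infty}=\ell_X(\gamma)$, which fails in general --- for instance on the Cayley graph of $\ints$ with generating set $\{\pm1,\pm2\}$ and $\gamma$ the unit translation one has $\ell_X(\gamma)=1$ but $|\gamma|_{X,\infty}=1/2$. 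The per-step estimate $d(x,\gamma^n x)\ge n\,d(x,\gamma x)-16n\delta$ is already exactly what is needed: divide by $n$ and let $n\to\infty$.
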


The {\bf Gromov product} with respect to $x_0$ is the function $\langle \cdot, \cdot \rangle_{x_0}\colon X \times X \to \real$ defined by
\[ \langle x, y \rangle_{x_0} := \frac12 \left( d_X(x,x_0) + d_X(y,x_0) - d_X(x,y) \right) .\]

There is a relation between the Gromov product, the stable translation length $|\gamma|_{X,\infty}$, and the quantity $|\gamma|_X = d_X(\gamma x_0, x_0)$, given by
\begin{lem} \label{lem:grop_len_stranlen}
Given $X$ a proper geodesic metric space, $x_0 \in X$ a basepoint, and $\gamma$ an isometry of $X$, we can find a sequence of integers $(m_i)_{i\in\nats}$
\[ 2 \lim_{i\to\infty} \langle \gamma^{m_i}, \gamma^{-1} \rangle_{x_0} \geq |\gamma|_X - |\gamma|_{X,\infty}.\]
\begin{proof}
By the definition of the stable translation length, we can find a sequence $(m_i)_{i\in\nats}$ such that 
\[ \lim_{i\to\infty} \left(|\gamma^{m_i+1}|_X - |\gamma^{m_i}|_X \right) \leq |\gamma|_{X,\infty} .\]

By the definition of the Gromov product,
\[ 2 \langle \gamma^{m_i}, \gamma^{-1} \rangle_{x_0} := |\gamma^{m_i}|_X + d_X(\gamma^{-1}x_0, x_0) - d_X(\gamma^{m_i} x_0,\gamma^{-1} x_0) .\]
Since $\gamma$ acts isometrically on $X$, $d_X(\gamma^{m_i} x_0,\gamma^{-1} x_0) = |\gamma^{m-i+1}|_X$ and $d_X(\gamma^{-1}x_0, x_0) = |\gamma|_X$. Then we have
\[ 2 \langle \gamma^{m_i}, \gamma^{-1} \rangle_{x_0} = |\gamma^{m_i}|_X + |\gamma|_X - |\gamma^{m_i+1}|_X \leq |\gamma|_X - |\gamma|_{X,\infty} \]
as desired.
\end{proof}
\end{lem}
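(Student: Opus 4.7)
The plan is to unpack both sides of the claimed inequality and match them up directly, so the work is really just to locate a good sequence $(m_i)$ for which the consecutive differences $|\gamma^{m_i+1}|_X - |\gamma^{m_i}|_X$ are asymptotically at most the stable translation length $|\gamma|_{X,\infty}$.

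First I would set $a_n := d_X(\gamma^n x_0, x_0) = |\gamma^n|_X$, so that by definition $|\gamma|_{X,\infty} = \lim_n a_n/n$. The key elementary observation is a Cesàro-type fact: if $a_n/n$ converges to $L$, then $\liminf_n (a_{n+1} - a_n) \leq L$. Indeed, were it the case that $a_{n+1} - a_n \geq L + \varepsilon$ for all large $n$, summing would force $a_n/n \geq L + \varepsilon/2$ eventually, a contradiction. This produces an increasing sequence $(m_i)$ with
\[ \lim_{i\to\infty} \bigl(a_{m_i+1} - a_{m_i}\bigr) \leq |\gamma|_{X,\infty}. \]

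Next I would rewrite the Gromov product. Since $\gamma$ acts by isometries, $d_X(\gamma^{m_i} x_0, \gamma^{-1}x_0) = d_X(\gamma^{m_i+1} x_0, x_0) = a_{m_i+1}$, and $d_X(\gamma^{-1} x_0, x_0) = |\gamma|_X$. Plugging into the definition yields
\[ 2\langle \gamma^{m_i}, \gamma^{-1}\rangle_{x_0} = a_{m_i} + |\gamma|_X - a_{m_i+1} = |\gamma|_X - \bigl(a_{m_i+1} - a_{m_i}\bigr). \]

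Taking $i \to \infty$ and applying the bound from the first step gives
\[ 2\lim_{i\to\infty}\langle \gamma^{m_i}, \gamma^{-1}\rangle_{x_0} = |\gamma|_X - \lim_{i\to\infty}\bigl(a_{m_i+1} - a_{m_i}\bigr) \geq |\gamma|_X - |\gamma|_{X,\infty}, \]
which is exactly the claim. There is no real obstacle here: the only nontrivial point is the Cesàro observation, and it is elementary. One minor care needed is to phrase the conclusion as an inequality between limits (perhaps passing to a further subsequence, or reading the $\lim$ on the left-hand side as $\limsup$), since the raw sequence of Gromov products need not converge on the nose.
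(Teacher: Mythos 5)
Your proof is correct and follows the same route as the paper's: pick $(m_i)$ so that the consecutive differences $|\gamma^{m_i+1}|_X - |\gamma^{m_i}|_X$ converge to something at most $|\gamma|_{X,\infty}$, then unpack the Gromov product using that $\gamma$ acts by isometries. You actually supply a detail the paper glosses over (the Cesàro-type observation justifying the existence of $(m_i)$), and your worry about convergence of the Gromov products is resolved automatically once $(m_i)$ is chosen so the differences converge, since the product equals $|\gamma|_X$ minus that difference.
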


\subsection{Singular value decompositions} \label{sub:SVD}

We collect here facts about singular values and Cartan decomposition in $\SL(d,\real)$. The defining conditions for our representations will be phrased, in the first instance, in terms of these, and more generally they will be helpful for understanding the geometry associated to our representations. 

Given a matrix $g \in \GL(d,\real)$, let $\sigma_i(g)$ (for $1 \leq i \leq d$) denote its $i$\textsuperscript{th} singular value, and write $U_i(g)$ to denote the span of the $i$ largest axes in the image of the unit sphere in $\real^d$ under $g$, and $S_i(g) := U_i(g^{-1})$. Note $U_i(g)$ is well-defined if and only if we have a singular-value gap $\sigma_i(g) > \sigma_{i+1}(g)$.

More algebraically, given $g \in \GL(d,\real)$, we may write $g = KAL$, where $K$ and $L$ are orthogonal matrices and $A$ is a diagonal matrix with nonincreasing positive entries down the diagonal. The diagonal matrix $A$ is uniquely determined, and we may define $\sigma_i(g) = A_{ii}$; $U_i(g)$ is given by the span of the first $i$ columns of $K$.

For $g \in \SL(d,\real)$, this singular-value decomposition is a concrete manifestation of a more general Lie-theoretic object, a (particular choice of) Cartan decomposition $\SL(d,\real) = \SO(d) \cdot \exp(\mathfrak{a}^+) \cdot \SO(d)$, where $\SO(d)$ is the maximal compact subgroup of $\SL(d,\real)$, and $\mathfrak{a}^+$ is a positive Weyl chamber.

We recall that there is an adjoint action $\Ad$ of $\SL(d,\real)$ on $\mathfrak{sl}(d,\real)$. 

We will occasionally write (given $g = KAL$ as above) 
\[ a(g) := (\log A_{11}, \dots, \log A_{dd}) = (\log \sigma_1(g), \dots, \log \sigma_d(g)) ;\] 
we note that the norm $\|a(g)\| = \sqrt{(\log \sigma_1(g))^2 + \dots + (\log \sigma_d(g))^2}$ is equal to the distance $d(o, g \cdot o)$ in the associated symmetric space $\SL(d,\real)/\SO(d)$ (see e.g.\ formula (7.3) in \cite{BPS}.)

\subsection{Regular ideal points and the projective space} \label{sub:regular_ideal}
Finally, we collect here some remarks about a subset of the visual boundary which will be relevant to us, and its relation to the projective space as a flag space boundary.

Given fixed constants $C_r, c_r > 0$, a matrix $g \in \SL(d,\real)$ will be called {\bf $(P_1,C_r,c_r)$-regular} if it satisfies 
\begin{align}
\log \frac{\sigma_1}{\sigma_2}(g) \geq C_r \log\frac{\sigma_1}{\sigma_d}(g) - c_r .
\label{ineq:unif_reg}
\end{align}

Recall that the visual boundary of the symmetric space $\SL(d,\real) / \SO(d)$ consists of equivalence classes of geodesic rays, where two rays are equivalent if they remain bounded distance apart. For any complete simply-connected non-positively curved Riemannian manifold $X$, such as our symmetric space, the visual boundary is homeomorphic to a sphere, and may be identified with the unit sphere around any basepoint $o$ by taking geodesic rays $\xi\colon [0,\infty) \to X$ based at $o$ and identifying $\xi(1)$ on the unit sphere with $\lim_{t\to\infty} \xi(t)$ in the visual boundary. 

The set of all points in this visual boundary which are accumulation points of sequences $(B_n \cdot o)$, where $o$ varies over all possible basepoints in the symmetric space and $(B_n)$ over all divergent sequences of $(P_1,C_r,c_r)$-regular matrices with all $c_r > 0$, will be called the {\bf $(P_1,C_r)$-regular ideal points}. 

For fixed $C_r$, the set of $(P_1,C_r)$-regular ideal points is compact; indeed it has the structure of a fiber bundle over the projective space $\proj(\real^d)$ with compact fibers.

There is a map $\pi$ (a fibration) from the set of $(P_1,C_r)$-regular ideal points to $\proj(\real^d)$ given by taking $\lim_n g_n \cdot o$ to $\lim_{n\to\infty} U_1(g_n)$ 
(see \cite[\S\S2.5.1 \& 4.6]{KLP}, where this is stated in slightly different language, or \cite[Th.\,7.2]{reldomreps}).
The map $\pi$ is Lipschitz, with Lipschitz constant depending only on the regularity constant $C_r$ and the choice of basepoint $o$ implicit in the measurement of the singular values 
\cite[\S4.4]{Max}. 

\section{Relatively dominated representations} \label{sec:D+-EC}

\begin{defn}[{\cite[\S4]{reldomreps}}] \label{defn:reldomrep}
Let $\Gamma$ be a finitely-generated torsion-free group which is hyperbolic relative to a collection  $\mathcal{P}$ of proper infinite subgroups.

Let $S$ be a compatible generating set, and let $X = X(\Gamma, \mathcal{P}, S)$ be the corresponding cusped space
(see Definitions \ref{defn:combhoroball} and \ref{defn:cuspedspace} above.) As above, let $d_c$ denote the metric on $X$, and $|\cdot|_c := d_c(\id, \cdot)$ denote the cusped word-length.

A representation $\rho\colon\Gamma \to \GL(d,\real)$ is {\bf $P_1$-dominated relative to $\mathcal{P}$}, with lower domination constants $\ubar{C}, \ubar{\mu} > 0$, if it satisfies \begin{itemize}
\item (D$-$) for all $\gamma \in \Gamma$, $\frac{\sigma_1}{\sigma_{2}}(\rho(\gamma)) \geq \ubar{C} e^{\ubar\mu|\gamma|_c}$, 
\end{itemize} 
and the images of peripheral subgroups under $\rho$ are well-behaved, meaning that the following three conditions are satisfied:
\begin{itemize}
\item (D+) there exist constants $\bar{C}, \bar\mu > 0$ such that $\frac{\sigma_1}{\sigma_d}(\rho(\eta)) \leq \bar{C} e^{\bar\mu|\eta|_c}$ 
for every $\gamma \in \Gamma$; 
\item (unique limits) for each $P \in \mathcal{P}$, there exists $\xi_\rho(P) \in \proj(\real^d)$ and  $\xi^*_\rho(P) \in \Gr_{d-1}(\real^d)$ such that for every sequence $(\eta_n) \subset P$ with $\eta_n \to \infty$, we have $\lim_{n\to\infty} U_1(\rho(\eta_n)) = \xi_\rho(P)$ and $\lim_{n\to\infty} U_{d-1}(\rho(\eta_n)) = \xi^*_\rho(P)$;
\item (uniform transversality)
for every $P, P' \in \mathcal{P}$ and $\gamma \in \Gamma$, $\xi_\rho(P) \neq \xi_\rho(\gamma P'\gamma^{-1})$. 
Moreover,
for every $\ubar\upsilon,\bar\upsilon>0$, 
there exists $\delta_0 > 0$ 
such that for all $P, P' \in \mathcal{P}$ and $g, h \in \Gamma$
such that there exists a bi-infinite $(\ubar\upsilon,\bar\upsilon)$-metric quasigeodesic path $\eta gh \eta'$ where 
$\eta'$ is in $P'$ and $\eta$ is in $P$, 
we have 
\[ \sin \angle (g^{-1} \xi_\rho(P), h\, \xi^*_\rho(P')) > \delta_0 .\]
\end{itemize}
\end{defn}

\begin{rmk}
Since $\Gamma$ is finitely-generated, so are its peripheral subgroups, by \cite[Prop.\,4.28 \& Cor.\,4.32]{DGO}.
\end{rmk}

\begin{rmk}
It is also possible to formulate the definition without assuming relative hyperbolicity, if one imposes additional hypotheses (RH) (see below) on the peripheral subgroups $\mathcal{P}$; it is then possible to show that any group admitting such a representation must be hyperbolic relative to $\mathcal{P}$: see \cite{reldomreps} for details.
\end{rmk}

The definition which originally appeared in \cite{reldomreps} also had an additional  ``quadratic gaps'' hypothesis, as part of the definition of the peripheral subgroups having well-behaved images. The only input of this assumption into the subsequent results there was in \cite[Lem.\,5.4]{reldomreps}; the next proposition obtains the conclusion of that lemma from the other hypotheses (not including relative hyperbolicity), without using the quadratic gaps hypothesis.

\begin{defn}[{\cite[Def.\ 4.1]{reldomreps}}] \label{defn:(RH)}
Given $\Gamma$ a finitely-generated group, we say that a collection $\mathcal{P}$ of finitely-generated subgroups satisfies (RH) if 
\begin{itemize}
\item (malnormality) $\mathcal{P}$ is malnormal, i.e.\ for all $\gamma \in \Gamma$ and $P, P' \in \mathcal{P}$, $\gamma P \gamma^{-1} \cap P' = 1$ unless $\gamma \in P = P'$; 
\item (non-distortion) there exists $\nu > 0$ such that for any infinite-order non-peripheral element $\gamma \in \Gamma$, $|\gamma^n|_c \geq \nu |n|$; 
\item (local-to-global) a sufficient long peripheral word $p'$ with sufficiently long overlap with a geodesic word $\gamma p$ combine to form a uniform quasigeodesic $\gamma p'$ (we refer the reader to \cite{reldomreps} for the precise formulation.)
\end{itemize}
\end{defn}
All of these conditions hold when $\Gamma$ is hyperbolic relative to $\mathcal{P}$ (see e.g.\ \cite{osinRH}).

\begin{defn}
Let $\alpha\colon\ints \to \Cay(\Gamma)$ be a bi-infinite path with $\alpha(\ints) \subset \Gamma$.

We define the sequence
\begin{align*} 
x_\gamma  & = ( \dots A_{a-1}, \dots, A_{-1}, A_0, \dots, A_{b-1}, \dots) \\
 & := \resizebox{.92\hsize}{!}{$(\dots, \rho(\alpha(a)^{-1} \alpha(a-1)), \dots, \rho(\alpha(0)^{-1} \alpha(-1)), \rho(\alpha(1)^{-1} \alpha(0)), \dots, \rho(\alpha(b)^{-1} \alpha(b-1)), \dots )$} 
 \end{align*}
and call this the {\bf matrix sequence associated to $\alpha$}.
\end{defn}

\begin{prop} \label{prop:D+-_EC}
Given a representation $\rho\colon(\Gamma,\mathcal{P}) \to \SL(d,\real)$ satisfying (D$\pm$) (so that $\mathcal{P}$ implicitly satisfies (RH), and we can define a cusped space $X(\Gamma,\mathcal{P})$), 
and given $\ubar\upsilon, \bar\upsilon > 0$, there exist constants $C \geq 1$ and $\mu >0$, depending only on the representation $\rho$ and $\ubar\upsilon, \bar\upsilon$, such that for any matrix sequence $x = x_{\gamma}$ associated to a 
bi-infinite $(\ubar\upsilon,\bar\upsilon)$-metric quasigeodesic path $\gamma$ with $\gamma(0) = \id$,
\begin{align*}
d(U_1(A_{k-1} \cdots A_{k-n}), U_1(A_{k-1} \cdots A_{k-(n+1)})) & \leq C e^{-n\mu} \\
d(S_{d-1}(A_{k+n-1} \cdots A_k), S_{d-1}(A_{k+n} \cdots A_k)) & \leq C e^{-n\mu} .
\end{align*}
\end{prop}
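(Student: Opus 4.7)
The plan is to adopt the Kapovich--Leeb--Porti perspective flagged in the introduction, studying the orbit $(\rho(\gamma(n)) \cdot o)_{n \in \ints}$ in the symmetric space $X_{\mathrm{sym}} = \SL(d,\real)/\SO(d)$ and transferring exponential convergence in $X_{\mathrm{sym}}$ to convergence in $\proj(\real^d)$ via the Lipschitz projection $\pi$ from the $(P_1,C_r)$-regular ideal boundary described in \S\ref{sub:regular_ideal}. I will focus on the $U_1$ statement; the $S_{d-1}$ statement follows by running the analogous argument on the reversed quasigeodesic with $\rho$ replaced by its dual, using $S_{d-1}(g) = U_{d-1}(g^{-1})$ together with the natural identification $\Gr_{d-1}(\real^d) \cong \proj((\real^d)^*)$.

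First I would verify that $\rho(\gamma(n))$ is uniformly $(P_1,C_r,c_r)$-regular, for constants depending only on $\ubar\upsilon, \bar\upsilon$ and the (D$\pm$) constants. Applying condition (ii) of the metric quasigeodesic with $m = 0$ (using $\gamma(0) = \id$) yields $|\gamma(n)|_c \leq \bar\upsilon |n| + O(1)$, so by (D+), $\log(\sigma_1/\sigma_d)(\rho(\gamma(n))) \leq C_1 |n| + C_1$; and condition (i) together with (D$-$) gives $\log(\sigma_1/\sigma_2)(\rho(\gamma(n))) \geq c_1 |n| - c_1$. Together these give a regularity margin growing linearly in $|n|$. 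The same bounds hold uniformly in $k$ for the shifted path $n \mapsto \gamma(k)^{-1}\gamma(k-n)$, which is again a $(\ubar\upsilon,\bar\upsilon)$-metric quasigeodesic path through the identity, so it suffices to prove the estimate for $k = 0$ (or any other single value).

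Next I would convert the regularity into exponential convergence of the $U_1$-images of the partial products. One route is via a standard cocycle-type estimate: for $g, h \in \SL(d,\real)$ with $U_1(g)$ defined, there is a constant $C$ with
\[ d_{\proj}\bigl(U_1(gh), U_1(g)\bigr) \leq C \cdot \frac{\sigma_2(g)}{\sigma_1(g)} \cdot \frac{\sigma_1(h)}{\sigma_d(h)} .\]
Applied with $g = A_{k-1} \cdots A_{k-n}$ and $h = A_{k-n-1}$ and telescoped in $n$, the first factor decays like $e^{-c_1 n}$ by the previous step, and the second factor is controlled by $\bar{C} e^{\bar\mu |A_{k-n-1}|_c}$ via (D+). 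If the single-step contributions are summable against the exponential decay, the desired bound follows uniformly in $k$.

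The main obstacle is precisely this summation. During a deep peripheral excursion, a single factor $A_n = \rho(\gamma(n)^{-1}\gamma(n+1))$ has cusped length only $\approx \delta(n)$ but word length $\ell(n) \in [2^{\delta(n)-1}, 2^{\delta(n)+1}]$, and via (D+) the step factor $(\sigma_1/\sigma_d)(A_{k-n-1})$ can grow like $e^{C\delta(k-n-1)}$, so a naive term-by-term telescoping need not converge. The resolution, in the spirit of KLP, is to work geometrically in the cusped picture and show directly that the trajectory $(\rho(\gamma(n)) \cdot o)_n$ shadows a uniformly $P_1$-regular geodesic ray in $X_{\mathrm{sym}}$ by a Morse-type argument, so that exponential convergence of the ray to its regular ideal endpoint transfers via the Lipschitz fibration $\pi$ to the claimed exponential estimate in $\proj(\real^d)$. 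This geometric shadowing step, which absorbs the contribution of peripheral excursions into the uniform regularity margin rather than fighting it term by term, is where most of the technical work of the proof will be concentrated.
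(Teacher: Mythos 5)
Your strategy is the same one the paper uses (Kapovich--Leeb--Porti geometric picture, Lipschitz fibration $\pi$ from the regular ideal boundary, rather than BPS-style telescoping in the flag manifold), and your diagnosis of why naive telescoping breaks down along deep peripheral excursions is exactly right: that failure is the reason the paper had to abandon the approach it used in the hyperbolic case and instead work in the symmetric space. The reduction to $k=0$ by shifting, and the verification of uniform $(P_1,C_r,c_r)$-regularity of the partial products from (D$\pm$), also match the paper.

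The gap is that the last paragraph names the decisive step without supplying it, and the intuition you offer for it is not quite right as stated. ``Exponential convergence of the ray to its regular ideal endpoint'' is not a property a geodesic ray has on its own; what you actually need is a contraction estimate of the following shape: if $q = A_{k-1}\cdots A_{k-n}\cdot o$ lies within a \emph{uniform} distance $C_a$ of the Weyl cone over the limit flag (this is the higher-rank Morse lemma, \cite[Th.\,1.1]{KLP_Morse}, applied to the uniformly regular, linearly diverging orbit), and $d(o,q)\gtrsim n/\ubar\upsilon$, then $\sin\angle_o\bigl(q,\,\text{cone}\bigr)$ decays \emph{exponentially} in $n$, not just like $C_a/d(o,q)$. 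In a general CAT(0) space boundedness of the lateral deviation only yields polynomial angular decay; the exponential rate is a genuine symmetric-space phenomenon that uses uniform $P_1$-regularity. The paper gets it from \cite[Lem.\,4.8]{Max} applied to the \emph{midpoint} of the segment $oq$: that lemma bounds the distance from the midpoint to the Weyl cone by $\approx C_a e^{-(C_r/2\ubar\upsilon)n}$, and dividing by $d(o,\cdot)$ gives the angle bound. You need to either invoke such a contraction estimate explicitly or prove one; without it, ``shadowing a regular ray'' alone does not produce the exponential rate in the conclusion.

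A smaller point: your bound $|\gamma(n)|_c \leq \bar\upsilon|n| + O(1)$ from condition (ii) with $m=0$ implicitly uses a bound on $\delta(0)$; since $\gamma(0)=\id$ can sit inside a peripheral excursion, this deserves a sentence (one can reduce to the case $\delta(0)=0$ by a bounded reparametrization, or just note $\min\{\delta(m),\delta(0)\}\le\delta(0)$ and absorb the fixed quantity $\delta(0)$ into the additive constant for the given path, with uniformity restored after the Morse step).
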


\begin{proof}
Given (D$\pm$), there exists $C_r,c_r > 0$ such that inequality (\ref{ineq:unif_reg})
is satisfied 
for all $\gamma \in \Gamma$.
Specifically, we can take $C_r = {\ubar\mu}/{\bar\mu}$ and $c_r = ({\ubar\mu}/{\bar\mu}) \log \bar{C} - \log \ubar{C}$, where $\ubar{C},\ubar\mu,\bar{C},\bar\mu$ are the constants coming from the (D$\pm$) conditions.
In the language of Kapovich--Leeb--Porti --- see \cite{KLP}, or \cite{KL} for the relative case; we adapt the relevant parts of this language and framework here --- $\rho(\Gamma)$ is a uniformly regular subgroup of $\SL(d,\real)$.

Hence $\rho(\gamma)$ is $(P_1,C_r,c_r)$-regular, in the sense of \S\ref{sub:regular_ideal}, for all $\gamma \in \Gamma$, and given a divergent sequence $(\gamma_n)$, $\rho(\gamma_n) \cdot o$ converges to a $(P_1,C_r,c_r)$-regular ideal point in the visual boundary.

Roughly speaking, geodesics converging to $(P_1,C_r,c_r)$-regular ideal points have as many hyperbolic directions as possible in the symmetric space, and thus flag convergence along these geodesics should occur exponentially quickly, just as in the hyperbolic case. This intuition can be made precise with more work, as follows:

Recall that we have a Lipschitz map $\pi$ from the set of $(P_1,C_r)$-regular ideal points to $\proj(\real^d)$, with Lipschitz constant depending only on the regularity constant $C_r$ and the choice of basepoint $o$ implicit in the measurement of the singular values.

Moreover, since $\rho(\gamma)$ is $(P_1,C_r,c_r)$-regular for any $\gamma \in \Gamma$, given the Cartan decomposition $\rho(\gamma) = K_\gamma \cdot \exp(a(\rho(\gamma))) \cdot L_\gamma$, we have 
\[ \Xi_\rho(\gamma) = \pi \left( \lim_{n\to\infty} K_\gamma \cdot \exp(n a(\rho(\gamma))) \cdot L_\gamma \cdot o \right) .\]

Thus, given any sequence  $(\gamma_n) \subset \Gamma$, we have 
\[ d(\Xi_\rho(\gamma_n), \Xi_\rho(\gamma_m)) \leq C_{Lip} \cdot \sin \angle \left( \Ad(K_{\gamma_n})\cdot a(\rho(\gamma_n)), \Ad(K_{\gamma_m})\cdot a(\rho(\gamma_m)) \right) .\]

Now, if $x = x_\gamma = (A_n)_{n\in\nats}$ is a matrix sequence associated to a bi-infinite $(\ubar\upsilon,\bar\upsilon)$-metric quasigeodesic path $\gamma$ with $\gamma(0) = \id$, then $A_{k-1} \dots A_{k-n} = \rho(\gamma(k)^{-1} \gamma(k-n))$. We write $\rho(\gamma(k)^{-1} \gamma(k-n)) = K_{k,n} \cdot \exp(a(k,n)) \cdot L_{k,n}$ to denote the parts of the Cartan decomposition.

By $(P_1,C_r,c_r)$-regularity and the higher-rank Morse lemma \cite[Th.\,1.1]{KLP_Morse}, the limit 
$$\lim_{n\to\infty} U_1(A_{k-1} \cdots A_{k-n}) = \lim_{n\to\infty} K_{k,n} \langle e_1 \rangle = \lim_{n\to\infty} \Ad(K_{k,n})\cdot a(k,n)$$ exists, and 
we have a bound $C_a$ on the distance\footnote{For readers more acquainted with the language of Kapovich--Leeb--Porti: this is the distance to the Weyl cone over the $C_r$-regular open star of $\lim_n K_{k,n} \langle e_1 \rangle$.} from $A_{k-1} \cdots A_{k-n} \cdot o$ to a nearest point on any $(P_1,C_r)$-regular ray $(g_n \cdot o)$ starting at $o$ such that $\lim_{n\to\infty} U_1(g_n) = \lim_n K_{k,n} \langle e_1 \rangle$ (below, we refer to any such point as $\pi_{\lim} A_{k-1} \cdots A_{k-n} \cdot o$), where $C_a$ depends only on $C_r, c_r$ and $\ubar\upsilon,\bar\upsilon$. 

Then, by \cite[Lem.\,4.9]{Max} applied with $p=o$ our basepoint, $\alpha_0 = C_r$, $\tau$ a model Weyl chamber corresponding to the first singular value gap, $q = A_{k-1} \cdots A_{k-n} \cdot o$, the point $r = \pi_{\lim}\, q$, the constant $2l = \|a(k,n)\| \geq \ubar\upsilon^{-1} n - \ubar\upsilon$ and $D = C_a$, we have
\begin{align*}
\sin \angle \left( \Ad(K_{k,n}) \, a(k,n), \lim_n K_{k,n} \langle e_1 \rangle \right) 
 & = \sin \angle \left( \frac12 \Ad(K_{k,n})\, a(k,n), \lim_n K_{k,n} \langle e_1 \rangle \right) \\ 
 & \leq \frac{d(q/2,\pi_{\lim}q/2)}{d(o,\pi_{\lim} q/2)} \\
 & \leq \frac{2C_a e^{C_a / \sqrt{d} + \ubar\upsilon/2} e^{-(C_r / 2\ubar\upsilon) n}}{d(o,\pi_{\lim}\, q/2)} \\
 & \leq 2C_a e^{C_a / \sqrt{d} + \ubar\upsilon/2} e^{-(C_r / 2\ubar\upsilon) n}
\end{align*}
once $n$ is sufficiently large, where ``sufficiently large'' depends only on the dimension $d$, our constants $C_r, C_a$ and choice of basepoint $o$; here $q/2$ denotes the midpoint of $oq$, which can be written as 
\[ K_{k,n} \cdot \exp\left(\frac12 a(k,n)\right) \cdot L_{k,n} \cdot o . \]

Hence we can find $\hat{C} \geq 2C_a e^{C_a / \sqrt{d} + \ubar\upsilon/2}$ such that 
$$ \sin \angle \left( \Ad(K_{k,n}) \, a(k,n), \lim_n K_{k,n} \langle e_1 \rangle \right) 
 \leq \hat{C} e^{-(C_r / 2\ubar\upsilon) n} $$
for all $n$, and so 
$d\left(U_1(A_{k-1} \cdots A_{k-n}), U_1(A_{k-1} \cdots A_{k-n-1}) \right)$ is bounded above by
\begin{align*}
 & C_{Lip} \sin \angle \left( \Ad(K_{k,n}) \, a(k,n), \Ad(K_{k,n+1}) \, a(k,n+1) \right) \\
 & \quad \leq C_{Lip} \hat{C} \left( 1+ e^{-C_r/2\ubar\upsilon} \right) e^{-(C_r/2\ubar\upsilon) n} .
\end{align*}
This gives us the desired bound with 
\begin{align*}
\mu = \frac12 C_r \ubar\upsilon^{-1} = \frac12 \ubar\mu (\bar\mu \ubar\upsilon)^{-1} && \text{and} && C = C_{Lip} \hat{C} \left( 1+ e^{-\mu} \right) .
\end{align*}

The analogous bound for $d\left(S_{d-1}(A_{k+n-1} \cdots A_k), S_{d-1}(A_{k+n} \cdots A_k) \right)$ can be obtained by arguing similarly, or by working with the dual representation --- for the details of this part we refer the interested reader to the end of the proof of \cite[Lem.\ 5.4]{reldomreps}.
\end{proof}

\section{A characterisation using eigenvalue gaps} \label{sec:reldom_eiggap}

Suppose $\Gamma$ is hyperbolic relative to $\mathcal{P}$. We have, as above, the cusped space $X = X(\Gamma,\mathcal{P})$, which is a $\delta$-hyperbolic space on which $\Gamma$ acts isometrically and properly. We define $|\cdot|_{c,\infty}$ to be the stable translation length on this space, i.e.
\[ |\gamma|_{c,\infty} := \lim_{n\to\infty} \frac{|\gamma^n|_c}n\] 
where $|\cdot|_c := d_X(\id,\cdot)$ as above. 

We remark that by Proposition \ref{prop:Xhyp_translen_stranslen} the eigenvalue gap conditions below may be equivalently formulated in terms of the translation length $\ell_X(\gamma)$.  

Given $A \in \GL(d,\real)$, let $\lambda_i(A)$ denote the magnitude of the $i$\textsuperscript{th} largest eigenvalue of $A$. 
We will prove the following theorem.
We remind the reader that the (D$\pm$) and (D$^\lambda_\pm$) conditions referred to in the theorem statement were defined in Definition \ref{defn:reldomrep} and in the statements of Theorems \ref{thm:intro_reldom_eiggap} and \ref{thm:intro_gaps+maps}. The limit set $\Lambda_1(\rho)$ which appears in the theorem statement was also defined in the statement of Theorem \ref{thm:intro_gaps+maps}.

\begin{thm} \label{thm:reldom_eiggap}
Let $\Gamma$ be hyperbolic relative to $\mathcal{P}$ and
$\rho\colon \Gamma \to \SL(d,\real)$ be a representation.
If $\rho$ satisfies (D$\pm$), then it satisfies (D$^\lambda_\pm$).

Conversely, if $\rho$ satisfies (D$^\lambda_\pm$), and either (a) is semisimple or (b) admits 
continuous, $\rho$-equivariant, transverse limit maps
$(\xi,\xi^*) \colon \del(\Gamma,\mathcal{P}) \to \proj(\real^d) \times \proj(\real^{d*})$
such that $\xi$ is a homeomorphism onto $\Lambda_1(\rho)$, then $\rho$ also satisfies (D$\pm$).
\end{thm}

\begin{cor} [Theorem \ref{thm:intro_reldom_eiggap}] \label{cor:reldom_eiggap} 
Let $\Gamma$ be hyperbolic relative to $\mathcal{P}$.
A semisimple representation $\rho\colon \Gamma \to \SL(d,\real)$ is $P_1$-dominated relative to $\mathcal{P}$ if and only if it satisfies ($\mathrm{D}^\lambda_\pm$) as well as the unique limits and uniform transversality conditions from Definition \ref{defn:reldomrep}.
\end{cor}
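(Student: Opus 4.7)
The corollary is essentially a bookkeeping restatement of Theorem \ref{thm:reldom_eiggap}. Since the unique limits and uniform transversality conditions appear verbatim in both the definition of $P_1$-dominated and in the statement being proved, it suffices to establish that (D$\pm$) and (D$^\lambda_\pm$) are equivalent in the presence of these peripheral hypotheses --- crucially, without the semisimplicity assumption of Theorem \ref{thm:reldom_eiggap}.

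The forward direction requires no appeal to Theorem \ref{thm:reldom_eiggap}: given that $\rho$ is $P_1$-dominated, the peripheral conditions hold by definition, and (D$^\lambda_\pm$) follows from (D$\pm$) by a direct Gelfand argument. Applying (D$-$) to $\gamma^n$ yields
\[ \log \frac{\sigma_1}{\sigma_2}(\rho(\gamma^n)) \geq \log \ubar{C} + \ubar\mu\, |\gamma^n|_c, \]
and dividing by $n$ and letting $n \to \infty$, the left-hand side converges to $\log \frac{\lambda_1}{\lambda_2}(\rho(\gamma))$ --- by Gelfand's spectral radius formula applied separately to $\rho$ and to $\Lambda^2 \rho$, together with $\rho(\gamma^n) = \rho(\gamma)^n$ --- while $|\gamma^n|_c/n \to |\gamma|_{c,\infty}$ by definition of the stable translation length. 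This gives (D$^\lambda_-$) with constant $1$ and the same exponent $\ubar\mu$, and (D$^\lambda_+$) is entirely symmetric.

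For the backward direction, assume (D$^\lambda_\pm$) together with the peripheral conditions. When $\rho$ is semisimple, (D$\pm$) is immediate from Theorem \ref{thm:reldom_eiggap} and $\rho$ is then $P_1$-dominated by definition. The general case I plan to handle by passing to the semisimplification $\rho^{\mathrm{ss}}$: since eigenvalues are preserved under semisimplification, $\rho^{\mathrm{ss}}$ inherits (D$^\lambda_\pm$), so Theorem \ref{thm:reldom_eiggap} applied to $\rho^{\mathrm{ss}}$ yields (D$\pm$) for $\rho^{\mathrm{ss}}$. One then transfers (D$\pm$) back to $\rho$ by observing that the unipotent contributions in the Jordan decomposition alter $\log \sigma_i(\rho(\gamma))$ only by an additive polynomial in $|\gamma|_c$, which is absorbed by the exponential rates after a small adjustment of $\ubar\mu$ and $\bar\mu$. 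I expect this last transfer step to be the main obstacle: a uniform-in-$\gamma$ comparison of singular values between $\rho$ and $\rho^{\mathrm{ss}}$ is needed, and the peripheral hypotheses (unique limits and uniform transversality) are the natural tool for enforcing that uniformity, via the control they impose on the behavior of $\rho$ on peripheral cosets.
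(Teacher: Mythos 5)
Your forward direction is correct and matches what the paper does in the proof of Theorem \ref{thm:reldom_eiggap}: the Gelfand-type identity $\log \lambda_i(A) = \lim_n \tfrac1n \log\sigma_i(A^n)$ together with $|\gamma^n|_c/n \to |\gamma|_{c,\infty}$ gives $(\mathrm{D}^\lambda_\pm)$ from $(\mathrm{D}\pm)$, and the peripheral conditions carry over verbatim. No semisimplicity is used in that direction, as you correctly note.

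Your backward direction, however, has a genuine gap, and it is precisely the step you flag as ``the main obstacle.'' The paper itself states Corollary \ref{cor:reldom_eiggap} with no proof; the implied argument is simply ``combine Theorem \ref{thm:reldom_eiggap} with Definition \ref{defn:reldomrep}.'' Theorem \ref{thm:reldom_eiggap} has a \emph{semisimple} hypothesis which the corollary drops, and you are right to notice this. But the repair you propose does not go through as written. Passing to the semisimplification $\rho^{\mathrm{ss}}$ does preserve $(\mathrm{D}^\lambda_\pm)$, and Theorem \ref{thm:reldom_eiggap} then gives $(\mathrm{D}\pm)$ for $\rho^{\mathrm{ss}}$; the problem is transferring $(\mathrm{D}+)$ back to $\rho$. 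Writing $\rho$ in block upper-triangular form with $\rho^{\mathrm{ss}}$ on the diagonal, one always has $\tfrac{\sigma_1}{\sigma_d}(\rho(\gamma)) \geq \tfrac{\sigma_1}{\sigma_d}(\rho^{\mathrm{ss}}(\gamma))$, so $(\mathrm{D}+)$ for $\rho^{\mathrm{ss}}$ gives no a priori \emph{upper} bound on the singular value spread of $\rho(\gamma)$. The off-diagonal blocks are governed by a cocycle whose growth is bounded only by an exponential in the \emph{word} length $|\gamma|$, not in the cusped length $|\gamma|_c$; for elements deep in a peripheral coset, $|\gamma|$ can be exponentially large compared to $|\gamma|_c$, so ``additive polynomial in $|\gamma|_c$'' is exactly what one cannot assert in general. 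Your hope that the unique-limits and uniform-transversality conditions enforce the needed uniformity is plausible in spirit but is not spelled out, and it is not the kind of thing that follows from a one-line appeal to those conditions --- they control the limit data of $\rho$, not a comparison between $\rho$ and $\rho^{\mathrm{ss}}$.

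To be clear about what the paper itself does: it states the corollary as immediate from Theorem \ref{thm:reldom_eiggap} and the definition, so the semisimplicity issue you identified is also latent in the paper as written. If one reads the corollary as tacitly restricting to semisimple $\rho$ (consistent with the theorem it cites), then the backward direction is immediate and your semisimplification detour is unnecessary. If the corollary is really meant for arbitrary $\rho$, then a transfer lemma comparing $\sigma_i(\rho(\gamma))$ with $\sigma_i(\rho^{\mathrm{ss}}(\gamma))$ uniformly over $\Gamma$ is needed, and supplying it is a genuine piece of work rather than a bookkeeping step. Either way, you should not present the transfer as something that the peripheral hypotheses deliver for free.
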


\begin{rmk}
We remark that the ($\mathrm{D}^\lambda_+$) condition really is equivalent to requiring if $\eta$ is a peripheral element (so $|\eta|_{c,\infty} = 0$), then all the eigenvalue of $\rho(\eta)$ have absolute value 1. If $\gamma$ is such that $|\gamma|_{c,\infty} = |\gamma|_\infty$, then the condition always holds because $\Gamma$ is finitely-generated; more generally we have 
\begin{align*}
\lambda_1(\rho(\gamma\eta)) & \leq \lambda_1(\rho(\gamma)) \lambda_1(\rho(\eta)),\mbox{ and} \\ 
\lambda_d(\rho(\gamma\eta)) &= \lambda_1(\rho(\eta^{-1} \gamma^{-1}) )^{-1} \geq \lambda_d(\rho(\gamma)) \lambda_d(\rho(\eta)), 
\end{align*}
and we can use these to piece together the condition on non-peripheral and peripheral parts of the word for $\gamma$.
\end{rmk}

\begin{proof}[Proof of Theorem \ref{thm:reldom_eiggap}]
We recall the identity $\log \lambda_i(A) = \lim_{n\to\infty} \frac{\log \sigma(A^n)}n$. Given (D$-$), we have
\begin{align*}
(\log \lambda_1 - \log \lambda_2) (\rho(\gamma)) & = \lim_{n\to\infty} \frac1n (\log \sigma_1 - \log \sigma_2) (\rho(\gamma^n)) \\ 
 & \geq \lim_{n\to\infty} \frac1n (\log \ubar{C} + \ubar\mu |\gamma^n|_c) = \ubar\mu |\gamma|_{c,\infty} 
\end{align*}
and so
\begin{align*}
\frac{\lambda_1}{\lambda_2}(\rho(\gamma)) & \geq e^{\ubar\mu |\gamma|_{c,\infty}}.
\end{align*}
Given (D+), we have
\begin{align*}
(\log \lambda_1 - \log \lambda_d) (\rho(\gamma)) & = \lim_{n\to\infty} \frac1n (\log \sigma_1 - \log \sigma_d) (\rho(\gamma^n)) \\ 
 & \leq \lim_{n\to\infty} \frac1n (\log \bar{C} + \bar\mu |\gamma^n|_c) = \bar\mu |\gamma|_{c,\infty}
\end{align*}
and so
\begin{align*}
\frac{\lambda_1}{\lambda_d}(\rho(\gamma)) & \leq e^{\bar\mu |\gamma|_{c,\infty}}.
\end{align*}

Hence (D$\pm$) implies (D${}^\lambda_\pm$).

In the other direction, we remark that by \cite[Th.\,5.3]{Kostas_AnosovWEG} together with Theorem \ref{thm:relhyp_floydbdy},
$\Gamma$ satisfies property U, i.e.\ there exist a finite subset $F \subset \Gamma$ and a constant $L > 0$ such that for every $\gamma \in \Gamma$ there exists $f \in F$ with
\begin{equation} \label{eq:propweakU}
|f\gamma|_\infty \geq |f\gamma| - L .
\end{equation}

We observe that this means that given any $\gamma \in \Gamma$ and $\eps > 0$, there exists $n_0 >0$ such that $|(f\gamma)^n| \geq n|f\gamma| - (1-\eps)Ln$ for all $n \geq n_0$, or in words there is bounded cancellation between the start and end of $f\gamma$.

We will now leverage this to obtain a relative version of the previous inequality, namely that for any given $\eps >0$, there exists $n_1 > 0$ such that $$|(f\gamma)^n|_c \geq \frac1{12} |f\gamma|_c - L$$ for all $n \geq n_1$.

To do so, we will impose some additional requirements on the finite set $F$ appearing above.

To describe these requirements, and to prove our relative inequality, we will use the framework and terminology described in \S\ref{sub:hat_unhat}. Abusing notation slightly, write $f\gamma$ to denote a geodesic path from $\id$ to $f\gamma$ in the Cayley graph. Consider this $f\gamma$ as a relative path $(f\gamma,H)$ with $H = H_1 \cup \dots \cup H_k$, and write $\eta_i = f\gamma|_{H_i}$, so each $\eta_i$ is a peripheral excursion. 

\begin{lem} \label{lem:adapt_T53}
Given $\Gamma$ a non-elementary relatively hyperbolic group, there exists a finite subset $F \in \Gamma$ and a constant $L > 0$ such that for every $\gamma \in \Gamma$
there exists $f \in F$ such that
\[ |f\gamma|_\infty \geq |f\gamma| - L \]
and the peripheral excursions of $(f\gamma)^n$ are precisely $n$ copies of the peripheral excursions of $f\gamma$.
\end{lem}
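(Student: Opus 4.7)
The plan is to upgrade the standard property U (Theorem 5.3 of \cite{Kostas_AnosovWEG}) to a stronger version that additionally guarantees a non-peripheral structure at the endpoints of any geodesic representative of $f\gamma$; this non-peripheral structure is what forces the peripheral excursions of $(f\gamma)^n$ to be precisely $n$ disjoint copies of those of $f\gamma$, with no merging at the $n-1$ junctions.

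The starting point is Theorem \ref{thm:relhyp_floydbdy}: relatively hyperbolic groups admit non-trivial Floyd boundaries, so by Theorem 5.3 of \cite{Kostas_AnosovWEG} there exist a finite $F_0 \subset \Gamma$ and $L_0>0$ satisfying the standard property U inequality \eqref{eq:propweakU}. Next, since $(\Gamma,\mathcal{P})$ is non-elementary, one fixes an element $h \in \Gamma$ which is not conjugate into any peripheral subgroup: such an $h$ acts loxodromically on the cusped space with two conical (non-parabolic) fixed points on $\del(\Gamma,\mathcal{P})$, and its axis stays uniformly bounded distance away from every horoball. Consequently, for all sufficiently large fixed $N$, any geodesic representative of $h^N$ contains no peripheral excursion at all, and both its first and last generators are non-peripheral.

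Third, take $F := h^N F_0 h^N$, with $N$ chosen large enough to dominate the bounded-cancellation constants appearing in the following step. Given $\gamma \in \Gamma$, apply \eqref{eq:propweakU} to the element $h^N \gamma h^N$, obtaining $f_0 \in F_0$ with $|f_0 h^N \gamma h^N|_\infty \geq |f_0 h^N \gamma h^N| - L_0$; setting $f := h^N f_0 h^N \in F$ and using conjugation-invariance of stable translation length together with the triangle inequality (absorbing a uniformly bounded loss into a new constant $L$), one deduces $|f\gamma|_\infty \geq |f\gamma| - L$. Moreover, by the choice of $N$, any geodesic representative of $f\gamma$ begins and ends with a substantial intact sub-block of $h^N$, so its first and last generators are non-peripheral; this is precisely what prevents any peripheral excursion in $(f\gamma)^n$ from straddling two consecutive copies of $f\gamma$, and yields the claimed peripheral excursion count.

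The main obstacle is justifying that the $h^N$ buffers at the ends of $f\gamma$ really do persist in \emph{any} geodesic representative, rather than being absorbed into peripheral excursions contributed by the $f_0$ block or by the interior of $\gamma$. This relies on the axis of $h$ being uniformly far from every horoball, combined with stability of quasi-geodesics in the $\delta$-hyperbolic cusped space, translated back to statements in the Cayley graph via Proposition \ref{prop:unhat_distance}; a careful accounting of how peripheral excursions can appear under projection from geodesics in the cusped space is what ultimately produces the claimed non-peripheral endpoint structure and the peripheral-excursion count for all iterates.
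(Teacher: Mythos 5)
Your approach is genuinely different from the paper's, and it has an unresolved gap at the step you yourself flag as ``the main obstacle.''

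The central problem with the conjugated set $F := h^N F_0 h^N$ is that the $h^N$ blocks do \emph{not} survive reduction to a geodesic representative of $f\gamma$: nothing prevents $\gamma$ from beginning with $h^{-N} f_0^{-1} h^{-N}$ (or similar), so that $f\gamma$ reduces to an essentially arbitrary group element with no non-peripheral buffer at either end. The bounded cancellation coming from property U controls cancellation between adjacent copies of $f\gamma$ in the powers $(f\gamma)^n$, but it says nothing about cancellation internal to a single copy, which is what erases the buffer. Appealing to the axis of $h$ being far from horoballs plus quasi-geodesic stability in the cusped space is a plausible-sounding direction, but translating it into a statement about geodesic \emph{words} (where peripheral excursions live) via Proposition~\ref{prop:unhat_distance} is exactly the hard part, and it is left unaddressed.

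The paper's proof avoids this problem by two means that your proposal does not reproduce. First, it does not invoke property U as a black box: it re-derives it through the Floyd-boundary machinery so as to construct $F_0 = \{f_1^N, f_1^{N+1}, f_2^N, f_2^{N+1}, e\} \cup S$ with \emph{built-in flexibility}. For a given $\gamma$, if the first viable choice $g = f_i^N$ causes cancellation to reach a peripheral excursion, the argument shows $g = f_i^{N+1}$ also satisfies the length inequality and moves the offset so the cancellation no longer reaches the excursion (and if $g = e$, a non-peripheral generator $s$ serves, at the modest cost of enlarging $L$). A black-box application of property U yields only a single $f$ per $\gamma$ and provides no such freedom. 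Second, the paper uses malnormality of the peripheral subgroups to show that cancellation in $(g\gamma)^n$ cannot propagate across more than two consecutive copies: a relation $\alpha\eta\beta$ with $\eta$ peripheral and $\alpha,\beta$ not in $P$ would force $\alpha = \beta^{-1}$, which is impossible. This crisp algebraic lemma, together with the flexibility in $F_0$, does the work that your geometric buffer argument would need to do but does not carry out. Without a substitute for these two ingredients, the peripheral-excursion count for $(f\gamma)^n$ is not established.
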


We defer the proof of this statement and first complete the proof of the theorem given the statement. 

By Proposition \ref{prop:unhat_distance},
\[ |f\gamma|_c \leq 4 \left( \ell(f\gamma) - \sum_{i=1}^k\ell(\eta_i) + \sum_{i=1}^k \hat\ell(\eta_i) \right).\]
By (\ref{eq:propweakU}), $\ell((f\gamma)^n) \geq n|f\gamma| - (1-\eps)Ln$ for all sufficiently large $n$. 
Crucially, by our assumption on the peripheral excursions of $(f\gamma)^n$, the total length of peripheral excursions for $(f\gamma)^n$ remains $n \sum_{i=1}^k \ell(\eta_i)$, and
the sum of the resulting $\hat\ell$ remains $n \sum_{i=1}^k \hat\ell(\eta_i)$.

Now we may use Proposition \ref{prop:unhat_distance} to conclude that 
\[ |(f\gamma)^n|_c \geq \frac13 \left(n\ell(f\gamma) - n\sum_{i=1}^k \ell(\eta_i) + n \sum_{i=1}^k \hat\ell(\eta_i) - (1-\eps)Ln \right) .\]

But this implies
\begin{align*}
|f\gamma|_{c,\infty} & = \lim_{n\to\infty} \frac1n |(f\gamma)^n|_c \\
 & \geq \frac13 \left( \ell(f\gamma) - \sum_{i=1}^k \ell(\eta_i) + \sum_{i=1}^k \hat\ell(\eta_i)  - L \right) \\
 & > \frac1{12} |f\gamma|_c - L.
\end{align*}
as desired. 

On the other hand it is clear that $|f\gamma|_{c,\infty} \leq |f\gamma|_c$.

Now, there exists a finite $F' \subset \Gamma$ and $C > 0$ such that for every $\gamma \in \Gamma$ there exists $f'\in F'$ such that for every $i$,
\[ |\log \lambda_i(\rho(\gamma f')) - \log \sigma_i(\rho(\gamma)) | \leq C .\]
This follows (a) for semisimple $\rho$ from \cite[Th.\,2.6]{Kostas_AnosovWEG}, and (b) in the case of $\rho$ admitting equivariant transverse homeomorphisms onto $\Lambda_1(\rho)$ from the following two lemmas (directly for $i=1$, and via suitable exterior powers for more general $i$). 

\begin{lem}
Suppose that $X$ is a proper geodesic hyperbolic metric space, $d_\infty$ is a visual metric on $\del_\infty X$,  and $\Gamma \leq \Isom(X)$ is a non-elementary discrete subgroup. There exist $\eps > 0$ and a finite set $F \subset \Gamma$ such that: if $\gamma \in \Gamma$, then there exists some $f \in F$ such that $\gamma f$ is hyperbolic and $d_\infty( (\gamma f)^+, (\gamma f)^-) > \eps$. 
\end{lem}

This lemma is likely well-known; for a proof, see e.g.\ \cite[Lem.\ A.2]{ZZ}.

\begin{prop} \label{prop:singval_eig_bd}
Suppose $\Gamma$ is hyperbolic relative to $\mathcal{P}$ and $\rho \colon \Gamma \to \SL(d,\real)$ is a representation which (i) satisfies (D$^\lambda_-$), and (ii) admits continuous, $\rho$-equivariant, transverse, dynamics-preserving limit maps
$(\xi,\xi^*) \colon \del(\Gamma,\mathcal{P}) \to \proj(\real^d) \times \proj(\real^{d*})$
such that $\xi$ is a homeomorphism onto $\Lambda_1(\rho)$.

Then, given any $\epsilon > 0$, there exists a constant $C > 0$ such that
$$
\left| \log \frac{\sigma_1}{\lambda_1}(\rho(\gamma)) \right| < C
$$ 
for all hyperbolic $\gamma \in \Gamma$ with $d_\infty(\gamma^+, \gamma^-) > \epsilon$.
\end{prop}
As with a previous technical lemma, we will defer the proof of the proposition and first complete the proof of the theorem. 

Now, given ($\mathrm{D}^\lambda_\pm$), we have
\begin{align*}
\frac{\sigma_1}{\sigma_d}(\rho(\gamma)) & \leq C^2 \cdot \frac{\lambda_1}{\lambda_d}(\rho(\gamma f)) \\
 & \leq C^2 e^{\bar\mu |\gamma f'|_{c,\infty}}
 \leq C^2 e^{\bar\mu |\gamma f'|_c} \\
 & \leq C^2 (C_{F'})^{\bar\mu} \cdot e^{\bar\mu|\gamma|_c}
\end{align*}
where $C_{F'} := \max_{f'\in F'} e^{|f'|_c}$ and so (D+) holds. Given ($\mathrm{D}^\lambda_-$), we have
\begin{align*}
\frac{\sigma_1}{\sigma_2}(\rho(\gamma)) & \geq C^{-2} \cdot \frac{\lambda_1}{\lambda_2}(\rho(\gamma f')) \\
 & \geq C^{-2} \ubar{C} e^{\ubar\mu |\gamma f'|_{c,\infty}}
 \geq C^{-2} \ubar{C} e^{-\ubar\mu L} e^{\frac1{12}\ubar\mu |f \gamma f'|_c} \\
 & \geq C^{-2} \ubar{C} e^{-\ubar\mu L} (C_F C_F')^{-\frac1{12}\ubar\mu} \cdot e^{\ubar\mu|\gamma|_c}
\end{align*}
where $C_{F'}$ is as above and $C_F := \max_{f\in F} e^{|f|_c}$, and hence (D$-$) holds.
\end{proof}

\begin{proof}[Proof of Lemma \ref{lem:adapt_T53}]
We adapt the proof of \cite[Th.\,5.3]{Kostas_AnosovWEG} to show that we can choose $F$ to satisfy the additional requirements we have imposed here. 

Let $f$ be a Floyd function $f\colon \nats \to \real^+$ for which the Floyd boundary $\del_f \Gamma$ of $\Gamma$ is non-trivial. By Theorem \ref{thm:relhyp_floydbdy}, there is a map from $\del_f\Gamma$ to the Bowditch boundary $\del(\Gamma,\mathcal{P})$ which is injective on the set of conical limit points; hence, by \cite[Prop.\,5]{Karlsson}, we can find {\it non-peripheral} $f_1, f_2$ such $\{f_1^+, f_1^-\} \cap \{f_2^+, f_2^-\} = \varnothing$. We will use sufficiently high powers of these to form our set $F$; the north-south dynamics of the convergence group action of $\Gamma$ on $\del_f \Gamma$ will do the rest.

To specify what ``sufficiently high'' means it will be useful to define an auxiliary function $G\colon \ints_{>0} \to \real_{>0}$, which gives a measure of ``distance to infinity'' as measured by the Floyd function: concretely, take $G(x) := 10 \sum_{k= \lfloor x/2 \rfloor}^\infty f(k)$. Since $f$ is a Floyd function, $G(x) \searrow 0$ as $x \to \infty$. By \cite[Lem.\,1]{Karlsson}\footnote{By the monotonicity and positivity of $f$ and because $x \in \ints_{>0}$, our choice of $G$ bounds from above the function $4xf(x) + 2 \sum_{k = x}^\infty f(k)$ appearing in Karlsson's proof.}, we have
\begin{align}
d_f(g,h) & \leq G\left( \langle g, h \rangle_e \right) & d_f(g,g^+) & \leq G\left( |g|/ 2 \right)
\label{eq:karlsson_est}
\end{align}
for all $g, h \in \Gamma$.
Let $\eps = \frac16 d_f(f_1^\pm, f_2^\pm)$. Fix $M>0$ such that $G(x) \geq \frac\eps{10}$ if and only if $x \leq M$, and $R>0$ such that $G(x) \leq \frac\eps{10}$ if and only if $x \geq R$, and $N$ such that $\min\{ |f_1^{N'}|, |f_2^{N'}|\} \geq 2(M+R)$ for all $N' \geq N$. 

\begin{claim} 
For every non-trivial $\gamma \in \Gamma$ such that $d_f(\gamma^+, \gamma^-) \leq \eps$, there exists $i \in \{1,2\}$ such that $d_f(f_i^{N'} \gamma^+, \gamma^-) \geq \eps$ for all $N' \geq N$. 
\begin{proof}[Proof of claim]
By our choice of $\eps$, we can find $i \in \{1,2\}$ such that $d_f(\gamma^+, f_i^+) \geq 3\eps$: if $d_f(\gamma^+, f_1^\pm) < 3\eps$, then $d_f(\gamma^+, f_2^\pm) \geq d(f_2^\pm, f_1^\pm) - 3\eps = 3\eps$. Without loss of generality suppose $i=1$. 

There exists $n_0$ such that $G\left( \frac12 |\gamma^n|\right) < \eps$ for all $n \geq n_0$. For $n \geq n_0$ and $N' \geq N$, by our choice of $N$, we have
\begin{align*}
d_f(\gamma^n, f_1^{-N'}) & \geq d_f(\gamma^+, f_1^-) - d_f\left(f_1^-, f_1^{-N'}\right) - d_f(\gamma^+, \gamma^n) \\
& \geq 3\eps - G \left( \frac12 |f_1^{N'}| \right)  - G\left( \frac12|\gamma^n| \right) > \eps.
\end{align*}
Hence, for all $n \geq n_0$ and $N' \geq N$, we have $G(\langle \gamma^n, f_1^{-N'} \rangle_e) \geq d_f(\gamma^n, f_1^{-N'}) > \eps$, and $\langle \gamma^n, f_1^{-N'} \rangle_e \leq M$ by our choice of $M$.
Now choose a sequence $(k_i)_{i\in\nats}$ such that $|f_1^{k_i-N}| < |f_1^{k_i}|$ for all $i \in \nats$. For $n \geq n_0$ and $N' \geq N$, we have, by the definition of the Gromov product and the inequalities above,
\begin{align*}
2 \langle f_1^{N'}\gamma^n, f_1^{k_n} \rangle_e & = |f_1^{N'} \gamma^n| + |f_1^{k_n}| - |f_1^{N'-k_n} \gamma^n| \\
 & = |\gamma^n| + |f_1^{N'}| - 2\langle \gamma^n, f_1^{-N'} \rangle_e + |f_1^{k_n}| - |f_1^{N'-k_n} \gamma^n| \\
 & \geq |f_1^{N'}| - 2M + |f_1^{k_n}| - (|f_1^{N'-k_n} \gamma^n|-|\gamma^n|) \\ & \geq |f_1^{N'}| - 2M + |f_1^{k_n}| - |f_1^{N'-k_n}| \\
 & \geq |f_1^{N'}| - 2M \geq 2R.
\end{align*}
Then by our choice of $R$ we have 
\[ d_f(f_1^{N'} \gamma^+, f_1^+) \leq \lim_{i\to\infty} G( \langle f_1^{N'}\gamma^n, f_1^{k_n} \rangle_e ) \leq \eps/10 \]
whenever $n \geq n_0$ and $N' \geq N$; thus 
\[ d_f(f_1^{N'}\gamma^+, \gamma^-) \geq d_f(\gamma^+,f_1^+) - d_f(f_1^{N'}\gamma^+, f_1^+) - d_f(\gamma^+,\gamma^-) \geq \eps \]
whence the claim.
\end{proof} \end{claim}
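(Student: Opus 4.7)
The plan is to work entirely on the Floyd boundary $\del_f\Gamma$ and exploit two features: the fact that $\{f_1^\pm, f_2^\pm\}$ is a set of four points pairwise $6\eps$-apart in $d_f$, and the north-south dynamics of the convergence group action, which forces $f_i^{N'}\cdot x \to f_i^+$ whenever $x \neq f_i^-$. The argument breaks into three pieces: choosing the index $i$, showing $f_i^{N'}\gamma^+$ is $d_f$-close to $f_i^+$, and concluding by the triangle inequality.

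For the choice of $i$, I would apply the triangle inequality: since $d_f(f_1^\sigma, f_2^\tau) = 6\eps$ for any signs $\sigma,\tau$, the point $\gamma^+$ can be $3\eps$-close to at most one of $\{f_1^+, f_1^-\}$ and at most one of $\{f_2^+, f_2^-\}$; more pointedly, we can find $i \in \{1,2\}$ with $d_f(\gamma^+, f_i^+) \geq 3\eps$ (and this $i$ also satisfies $d_f(\gamma^+, f_i^-) \geq 3\eps$ after possibly relabeling, though the argument only needs the $+$ statement).

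The main step is to show $d_f(f_i^{N'}\gamma^+, f_i^+) \leq \eps/10$ for all $N' \geq N$, using the Karlsson estimates \eqref{eq:karlsson_est}. Fix $n_0$ large enough that $G(|\gamma^n|/2) < \eps$ for $n \geq n_0$, so $\gamma^n$ approximates $\gamma^+$ well in $d_f$. Since $d_f(\gamma^+, f_i^-) \geq 3\eps$ (by the selection above combined with $d_f(\gamma^+,\gamma^-) \leq \eps$, giving separation on both sides), a triangle inequality using $d_f(f_i^-, f_i^{-N'}) \leq G(|f_i^{N'}|/2) \leq \eps/10$ gives $d_f(\gamma^n, f_i^{-N'}) > \eps$, whence $\langle \gamma^n, f_i^{-N'}\rangle_e \leq M$ by our choice of $M$. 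The hard part of the proof is to convert this into a lower bound on a Gromov product of the form $\langle f_i^{N'}\gamma^n, f_i^{k}\rangle_e$, since this is what controls $d_f(f_i^{N'}\gamma^+, f_i^+)$. The algebraic maneuver is
\[ 2\langle f_i^{N'}\gamma^n, f_i^{k}\rangle_e = |f_i^{N'}\gamma^n| + |f_i^{k}| - |f_i^{N'-k}\gamma^n|, \]
together with $|f_i^{N'}\gamma^n| = |\gamma^n| + |f_i^{N'}| - 2\langle \gamma^n, f_i^{-N'}\rangle_e$ and the crude bound $|f_i^{N'-k}\gamma^n| \leq |f_i^{N'-k}| + |\gamma^n|$; choosing $k = k_n$ along a subsequence where $|f_i^{k_n - N'}| < |f_i^{k_n}|$ (which exists since $|f_i^k|$ grows unboundedly but not monotonically) cancels the $|f_i^k|$ terms and leaves $|f_i^{N'}| - 2M \geq 2R$ by our choice of $N$. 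Karlsson's estimate then yields $d_f(f_i^{N'}\gamma^+, f_i^+) \leq G(R) \leq \eps/10$ in the limit as $n \to \infty$.

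The conclusion is immediate: $d_f(f_i^{N'}\gamma^+, \gamma^-) \geq d_f(\gamma^+, f_i^+) - d_f(f_i^{N'}\gamma^+, f_i^+) - d_f(\gamma^+, \gamma^-) \geq 3\eps - \eps/10 - \eps \geq \eps$. I expect the main obstacle to be the algebraic bookkeeping in the Gromov-product expansion: one has to carefully arrange which subsequence of powers of $f_i$ to pair against $f_i^{N'}\gamma^n$ so that the large term $|f_i^{N'}|$ survives and the unwanted $|f_i^k|$ terms cancel, and then verify that the Karlsson-estimate application respects the asymptotic passage $n \to \infty$.
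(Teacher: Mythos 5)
Your proposal is correct and follows essentially the same route as the paper: the same choice of $i$ from the $6\eps$-separation of $\{f_1^\pm\}$ and $\{f_2^\pm\}$, the same Karlsson estimates giving $\langle \gamma^n, f_i^{-N'} \rangle_e \leq M$, the same Gromov-product expansion paired against a subsequence $f_i^{k_n}$ with $|f_i^{k_n-N'}| \leq |f_i^{k_n}|$ so that $|f_i^{N'}| - 2M \geq 2R$ survives, and the same closing triangle inequality. One small remark: your main step does use $d_f(\gamma^+, f_i^-) \geq 3\eps$ (not only the ``$+$'' statement, and not as a consequence of $d_f(\gamma^+,\gamma^-)\leq\eps$); this holds for the chosen $i$ because $\gamma^+$ lying within $3\eps$ of a point of one pair forces it to be at least $3\eps$ from both points of the other pair, which is exactly how the paper selects $i$, so only your justifying parentheticals, not the argument, need adjusting.
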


Now, with $f_1, f_2$ and $N$ as above, fix $F_0 = \{f_1^N, f_1^{N+1}, f_2^N, f_2^{N+1}, e\}$. Then there exists $g \in F_0$ such that $d_f(g\gamma^+, \gamma^-) \geq \eps$: if $d_f(\gamma^+, \gamma^-) \geq \eps$, choose $g = e$. Otherwise, from the above argument, either $g=f_1^N$ or $g=f_2^N$ works, and then so does $g=f_1^{N+1}$ or $g=f_2^{N+1}$ respectively.

Next fix $L = 2 \max_{g \in F_0} |g| + 2R + 1$; we will show that the desired result holds with $F := F_0 \cup S$ and this $L$. 
Without loss of generality suppose $|\gamma| > L-1$; otherwise $|\gamma| - |\gamma|_\infty \leq L$ and we have our desired inequality with $g=e$. Otherwise choose $g \in F_0$ such that $d_f(g \gamma^+, \gamma^-) \geq \eps$. To use this to obtain an inequality between $|g\gamma|$ and $|g\gamma|_\infty$, we use Lemma \ref{lem:grop_len_stranlen} with $g\gamma$ in the place of $\gamma$, the Cayley graph in the place of $X$, and $x_0 = e$ to obtain a sequence $(m_i)_{i\in\nats}$ such that
\begin{align}
\ 2 \lim_{i\to\infty} \langle (g\gamma)^{m_i}, (g\gamma)^{-1} \rangle_e \geq |g\gamma| - |g\gamma|_\infty , \label{eq:len_stablen_gropbd}
\end{align}
so it suffices to obtain an upper bound on the Gromov products $\langle (g\gamma)^{m_i}, (g\gamma)^{-1} \rangle_e$.

To obtain this bound, we start by noting that $g\gamma^+ = (g\gamma g^{-1})^+$, and using this, the triangle inequality, and the inequalities in (\ref{eq:karlsson_est}) to observe that
\begin{align*}
d_f\left( g\gamma^+, (g\gamma)^+ \right) & \leq d_f\left( g\gamma^+, g\gamma g^{-1}\right) + d_f\left(g\gamma g^{-1}, g\gamma\right) + d_f\left( (g\gamma)^+, g\gamma \right) \\
 & \leq G\left( \frac12 |g\gamma g^{-1}| \right) + G\left( \langle g\gamma g^{-1}, g\gamma \rangle_e \right) + G \left( \frac12|g\gamma| \right)
\end{align*}
and using liberally the monotonicity of $G$ on the last right-hand side, we obtain the further upper bound
\begin{align*}
d_f\left( g\gamma^+, (g\gamma)^+ \right) & \leq 3 G\left( \frac12|\gamma| - |g| \right)
\end{align*}
which, finally, because $\frac12 |\gamma| - |g| \geq R$, is bounded above by $\frac{3\eps}{10}$. 
Arguing similarly, we have
\begin{align*}
d_f\left( \gamma^{-1}, \gamma^{-1} g^{-1} \right) & \leq d_f\left(\gamma^-, \gamma^{-1} \right) + d_f \left( \gamma^{-1}, \gamma^{-1} g^{-1} \right) \\
 & \leq G \left( \frac12|g\gamma| \right) + G\left( \langle \gamma^{-1}, \gamma^{-1}g^{-1} \rangle_e \right) \\
 & \leq 2 G\left( \frac12|\gamma| - |g| \right) \leq \frac\eps5
\end{align*}
and hence we have
\begin{align*} 
d_f\left((g\gamma)^+, \gamma^{-1}g^{-1} \right) & \geq d_f\left( g\gamma^+, \gamma^- \right) - d_f\left( g\gamma^+, (g\gamma)^+ \right) - d_f\left( \gamma^-, \gamma^{-1}g^{-1} \right) \\ 
& \geq \eps - \frac{3\eps}{10} - \frac\eps 5 = \frac\eps2 .
\end{align*}
Thus we have $n_1 > 0$ such that $G \left( \langle (g\gamma)^n, (g\gamma)^{-1} \rangle_e \right) \geq d_f\left((g\gamma)^n, (g\gamma)^{-1} \right) \geq \frac\eps3$ and so $\langle (g\gamma)^n, (g\gamma)^{-1} \rangle_e \leq M$ for all $n \geq n_1$. This is the bound we feed into (\ref{eq:len_stablen_gropbd}) to obtain $|g\gamma| - |g\gamma|_\infty \leq 2M \leq 2R \leq L$, which was the inequality to be shown.

Finally, we prove the statement about  the peripheral excursions. 
We may also assume, without loss of generality, that $g\gamma$ contains at least one peripheral excursion, otherwise there is nothing left to prove. 

If we have a relation $\alpha\eta\beta$ with $\eta \in P \smallsetminus \{\id\}$ peripheral and $\alpha, \beta \notin P$ (and $\alpha$ not ending in any letter of $P$ and $\beta$ not starting in any letter of $P$), then $\alpha\eta\alpha^{-1} = \beta^{-1} \eta \beta$, and by malnormality this implies $\alpha = \beta^{-1}$, which is not possible since $\eta \neq \id$. Since we are assuming $g\gamma$ has peripheral excursions, we may thus assume that in $(g\gamma)^n$ there is no cancellation across more two copies of $g\gamma$, i.e.\ it suffices to look at cancellation between adjacent copies.

The peripheral excursions of $(g\gamma)^n$ are exactly $n$ copies of that of $g\gamma$ precisely when cancellation between adjacent copies of $g\gamma$ does not reach any of the peripheral excursions. 

Suppose now that this is not the case, i.e.\ cancellation between adjacent copies does reach the peripheral excursions. If $g = f_i^N$ (resp.\ $g=f_i^{N+1}$), then we may take $g = f_i^{N+1}$ (resp.\ $g=f_i^N$) instead; the desired inequalities still hold from the arguments above, and now cancellation between adjacent copies no longer reaches the peripheral excursions. 

Suppose instead $g = e$; then we may assume, from the argument above, that $|\gamma| \leq L-1$. We will instead take $g$ to be a non-peripheral generator $s$; then, while we had cancellation between adjacent copies before with $g=e$, we can no longer have it with $g=s$. Then $|s\gamma| \leq |\gamma| + 1 \leq L$, and we are done.
\end{proof}

\begin{proof}[Proof of Proposition \ref{prop:singval_eig_bd}]
Suppose for some $\eps>0$ no such $C>0$ exists, so that we have a sequence $(\gamma_n) \subset \Gamma$ of hyperbolic elements such that $d_\infty(\gamma_n^+, \gamma_n^-) > \eps$ for all $n$, and
$$
\left| \log \frac{\sigma_1}{\lambda_1}(\rho(\gamma_n f)) \right| \xrightarrow{n\to\infty} \infty .
$$ 


Let $\ell := \lim_n U_1(\rho(\gamma_n f))$. Given any biproximal $g \in \SL(d,\real)$, write $g^\pm$ to denote the attracting fixed line of $g^{\pm 1}$, i.e.\ the top eigenline, and $H_g^-$ to denote the repelling fixed hyperplane of $g$, i.e.\ the complementary subspace to $g^+$ preserved by $g$.

\begin{claim} 
$\ell$ is not transverse to $\lim_n H_{\rho(\gamma_n f)}^- = \xi^*(y)$.
\end{claim}
\begin{proof}[Proof of claim]
For each $n$, take a vector $v_n$ in $U_1(\rho(\gamma_n f))$ such that $u_n = \rho(\gamma_n f)^{-1} v_n$ is a unit vector, and write $v_n = v_{n,x} + v_{n,y}$ where $v_{n,y} \in \xi^*(y)$ and $v_{n,x} \in (\rho(\gamma_n f))^+$.
If $v_{n,x} = 0$ for all but finitely many $n$, then we already have that $\ell \not\pitchfork \xi^*(y)$; so (passing to a further subsequence if needed) suppose that $v_{n,x} \neq 0$ for all $n$.

Then, on the one hand,
$\|\rho(\gamma_n f) u_n\| = \sigma_1 (\rho(\gamma_n f))$ for all $n$. On the other hand,
\begin{align*}
\|\rho(\gamma_n f) u_n\| & = \|v_{n,x} + v_{n,y}\|
 \leq  \|v_{n,x}\| + \|v_{n,y}\| \\
 & = \|v_{n,x}\| \left( 1 + \frac{\|v_{n,y}\|}{\|v_{n,x}\|} \right)
  \leq \lambda_1(\rho(\gamma _n f)) \left( 1 + \frac{\|v_{n,y}\|}{\|v_{n,x}\|} \right). 
\end{align*}

If $\ell \pitchfork \xi^*(y)$, then the last ratio $\frac{\|v_{n,y}\|}{\|v_{n,x}\|}$ is bounded above by some $C'$ (for all sufficiently large $n$), and so, putting all of the above estimates together, we get
$$
\frac{\sigma_1}{\lambda_1}(\rho(\gamma_n f)) \leq 1+C'
$$
for all large enough $n$. Since we are assuming here that this doesn't happen, we conclude that $\ell \not\pitchfork \xi^*(y)$.
\end{proof}

Since $\xi$ is an equivariant homeomorphism onto $\Lambda_1(\rho)$, we have that $\ell = \xi(z)$ for some $z \in \partial(\Gamma,\mathcal{P})$.
By transversality, this implies that $\ell = \xi(y) = \lim_n (\rho(\gamma_n f))^{-}$. Now this will give us a contradiction since $U_1(\rho(\gamma_n f))$ is the line least expanded by $\rho(\gamma_n f)^{-1}$, 
but it cannot be so if it is so close to $\rho(\gamma_n f)^{-}$. More precisely, take a unit vector $w_n \in U_1(\rho(\gamma_n f))$. and write $w_n = w_{-,n} + w_{r,n}$, where $w_{-,n} \in \rho(\gamma_n f)^{1,-}$ and $w_{r,n} \perp w_{-,n}$. Then, given any $\epsilon' > 0$, for all large enough $n$, $\|w_{-,n}\| \geq 1-\epsilon'$, and
\begin{align*}
\| \rho(\gamma_n f)^{-1} w_n \| & \geq \lambda_1(\rho(\gamma_n f)^{-1}) (1 - \epsilon') \\
 & > \lambda_d(\rho(\gamma_n f)^{-1}) \geq \mu_d(\rho(\gamma_n f)^{-1}) ,
\end{align*}
where the middle inequality follows since 
\begin{align} \label{eq:singvalgap=>eiggap}
\log \frac{\lambda_1}{\lambda_d}(\rho(\gamma_nf))\geq \log \frac{\lambda_1}{\lambda_2}(\rho(\gamma_nf))
\xrightarrow{n\to\infty} \infty
\end{align}
by (D$^\lambda_-$) since there exists a uniform $\eps > 0$ such that $d_\infty(\gamma_n^+, \gamma_n^-) > \eps$ for all $n$.
\end{proof}

\section{Limit maps imply well-behaved peripherals} \label{sec:gaps+maps}

If we assume that our group $\Gamma$ is hyperbolic relative to $\mathcal{P}$, then the additional conditions of unique limits and uniform transversality which appear in either of the definitions of relatively dominated representations so far may also be replaced by a condition stipulating the existence of suitable limit maps from the Bowditch boundary $\del(\Gamma,\mathcal{P})$. As noted above, this gives us relative analogues of some of the characterizations of Anosov representations due to Gu\'eritaud--Guichard--Kassel--Wienhard \cite[Th.\,1.3 and 1.7 (1),(3)]{GGKW}.

\begin{thm}[Theorem \ref{thm:intro_gaps+maps}] \label{thm:gaps+maps}
Let $\Gamma$ be hyperbolic relative to $\mathcal{P}$.
A representation $\rho\colon \Gamma \to \SL(d,\real)$ is $P_1$-dominated relative to $\mathcal{P}$ if and only if (D$\pm$) (as in Definition \ref{defn:reldomrep} and Theorem \ref{thm:intro_gaps+maps}) are satisfied and there exist continuous, $\rho$-equivariant, transverse, dynamics-preserving limit maps 
$\xi_\rho\colon \del(\Gamma,\mathcal{P}) \to \proj(\real^d)$ and $\xi_\rho^*\colon \del(\Gamma,\mathcal{P}) \to \proj(\real^{d*})$. 
\begin{proof}
If $\rho$ is $P_1$-dominated relative to $\mathcal{P}$, then it satisfies (D$\pm$), and admits continuous, equivariant, transverse, dynamics-preserving limit maps \cite[Th.\,7.2]{reldomreps}.

Conversely, if suffices to show that the unique limits and uniform transversality conditions must hold once we have continuous, equivariant, transverse, dynamics-preserving limit maps. Unique limits follows from the limit maps being well-defined and dynamics-preserving, since there is a single limit point in $\del(\Gamma,\mathcal{P})$ for each peripheral subgroup. Transversality is immediate from the hypotheses, and the uniform version follows from a short argument, as done in \cite[Prop.\,8.5]{reldomreps}.
\end{proof}
\end{thm}

We remind the reader that the (D$^\lambda_\pm$) conditions and the limit set $\Lambda_1(\rho)$ which appear in the corollaries below were defined in the statement of Theorem \ref{thm:intro_gaps+maps}.
\begin{cor} \label{thm:eiggaps+maps_a}
Let $\Gamma$ be hyperbolic relative to $\mathcal{P}$.
A semisimple representation $\rho\colon \Gamma \to \SL(d,\real)$ is $P_1$-dominated relative to $\mathcal{P}$ if and only if (D$^\lambda_\pm$) are satisfied and there exist continuous, $\rho$-equivariant, transverse, dynamics-preserving limit maps
$(\xi_\rho,\xi_\rho^*) \colon \del(\Gamma,\mathcal{P}) \to \proj(\real^d) \times \proj(\real^{d*})$.
\begin{proof}
This follows immediately from Theorems \ref{thm:gaps+maps} and \ref{thm:reldom_eiggap}.
\end{proof}
\end{cor}

\begin{cor} \label{thm:eiggaps+maps_b}
Let $\Gamma$ be hyperbolic relative to $\mathcal{P}$.
A representation $\rho\colon \Gamma \to \SL(d,\real)$ is $P_1$-dominated relative to $\mathcal{P}$ if and only if (D$^\lambda_\pm$) are satisfied and there exist continuous, $\rho$-equivariant, transverse limit maps
$(\xi_\rho,\xi_\rho^*) \colon \del(\Gamma,\mathcal{P}) \to \proj(\real^d) \times \proj(\real^{d*})$
such that $\xi_\rho$ is a homeomorphism onto 
$\Lambda_1(\rho)$.
\begin{proof}
For a $P_1$-dominated representation, by \cite[Prop.\ 6.14 and Th.\ 7.2]{reldomreps}, the limit map $\xi$ is in fact an equivariant homeomorphism onto $\Lambda_1(\rho)$.

The desired result then follows from Theorems \ref{thm:gaps+maps} and \ref{thm:reldom_eiggap}.
\end{proof}
\end{cor}

As an application of Theorem \ref{thm:gaps+maps}, we can show that certain groups that play weak ping-pong on flag spaces are relatively dominated.
We remark that these examples have previously been claimed in \cite{KL}.

\begin{eg} \label{eg:proj_schottky}
Fix biproximal elements $t_1, \dots, t_k \in \PGL(d,\real)$. Write $t_i^\pm$ to denote the attracting lines and $H_{t_i}^\pm$ to denote the repelling hyperplanes of $t_i^{\pm1}$.

Assuming $t_i^+ \neq t_j^+$ for $i \neq j$ and $t_i^\pm \not\subset H_{t_j}^\mp$ for all $i, j$, and replacing the $t_i$ with sufficiently high powers if needed, we have open neighborhoods $A_i^\pm \subset \proj(\real^d) =: X$ of $t_i^\pm$, and $B_i^\pm \subset X$ of $H_{t_i}^\pm$ such that 
\begin{itemize}
    \item $A_i^\pm \subset B_i^\pm$ for $i=1\dots, k$, and $A_i^\sigma \cap B_j^{\sigma'} = \varnothing$ unless $i=j$ and $\sigma = \sigma'$,
    \item $t_i^{\pm1} \left( X \smallsetminus B_i^\pm \right) \subset A_i^\pm$ for $i=1, \dots, k$, and moreover
    \item there exists $\eps > 0$ such that $t_i^{\pm1}$ is $\eps$-Lipschitz on $X \smallsetminus B_i^\pm$ for all $i$ (see \cite[Lem.\ A.8]{CLSS}).
\end{itemize}

Suppose we have, in addition, unipotent elements $u_1, \dots, u_{k'} \in \PGL(d,\real)$ which each have well-defined attracting lines $u_j^+$
 and attracting hyperplanes $H_{u_j}^+$
(equivalently, well-defined largest Jordan blocks). Suppose, again passing to sufficiently high powers of the $u_1,\dots, u_{k'}$ if need be, there exist open neighborhoods $C_j^+$ of $u_j^+$ and $C_j^-$ of $H_{u_j}^+$ in $X = \proj(\real^d)$, such that 
\begin{itemize}
    \item $C_j^+ \subset C_j^-$ for $j=1, \dots, k'$, and the $\overline{C_1^+}, \dots, \overline{C_{k'}^+}$ are pairwise disjoint and also disjoint from the
the closures of all of the $B_i^\pm$, 
    \item $u_j^{\pm n}(X \smallsetminus C_j^-) \subset C_j^+$ for all non-zero $n$, and moreover
    \item there exists $c>0$ such that $u_j^{\pm n}$ is $\frac cn$-Lipschitz on $X \smallsetminus C_j^-$ for all $n  \in\ints_{>0}$.
\end{itemize}
To see that the we may assume the last hypothesis to hold: fix $u = u_j$. Let $v_1, \dots, v_d$ be a basis for $\real^d$ with respect to which $u$ may be written in Jordan normal form, where $v_1$ spans $u^+$ and $v_1, \dots, v_{d-1}$ span $H_{u}^+$. 

Up to introducing a biLipschitz error, we can choose a metric on $\proj(\real^d)$ given by pushing forward the suitable spherical metric obtained by viewing $u^+$ as the north pole and $\proj \langle v_2, \dots, v_d \rangle$ as the (projectivization of the) equator. 
In the affine chart given by taking $\langle v_2, \dots, v_d \rangle$ to be the hyperplane at infinity, 
if we consider polar coordinates $(r,\theta)$ with origin $u^+$, the spherical metric satisfies
\[ d\left( (r,\theta), (r',\theta') \right) \leq |\phi-\phi'| + \min\{\phi,\phi'\} \cdot |\theta-\theta'|\] 
where $\phi := \sin \arctan r$. 

Then, given two points $\xi_1, \xi_2 \in \proj(\real^d) \smallsetminus C_j^-$, with $\xi_i = (\theta_i,\phi_i)$ for $i=1,2$ in our coordinates, and abusing notation slightly to write $S^{\pm n} \ell_i = (S^{\pm n} \theta_i, S^{\pm n} \phi_i)$ for $i=1,2$, we have some constants $L, L' > 0$ such that
\begin{align*}
|S^{\pm n} \phi_2 - S^{\pm n} \phi_1| & \leq L\cdot\frac{\sigma_2}{\sigma_1}(S^{\pm n}) \cdot |\phi_2 - \phi_1| \leq \frac {L'}{n} \cdot |\phi_2 - \phi_1| \\
|S^{\pm n} \theta_2 - S^{\pm n} \theta_1| & \leq |\theta_2 - \theta_1|
\end{align*}
and so we have
\begin{align*}
d(S^{\pm n} \ell_1, S^{\pm n} \ell_2) & \leq \frac {L'}{n} \left( |\phi_2 - \phi_1| + \min( \phi_2, \phi_1) |\theta_2 - \theta_1| \right)
 \leq \frac{2L'}{n} \cdot d(\ell_1,\ell_2)
\end{align*} 
for all $n > 0$.
Hence we have the Lipschitz constants we seek.

Then, by a ping-pong argument, the group $\Gamma := \langle t_1, \dots, t_k, u_1, \dots, u_{k'} \rangle$ is isomorphic to a non-abelian free group $F_{k+k'}$.

Since we have finitely many generators, we can pick $\eps_0>0$ such that 
\begin{itemize}
    \item for all $i=1,\dots,k$ and for any $n > 0$ (resp.\ $n < 0$), $U_1(t_i^n)$ is within $\eps_0$ of $t_i^+$ (resp.\ $t_i^-$),
    \item for all $i=1,\dots,k$ and for any $n < 0$ (resp.\ $n > 0$), $U_{d-1}(t_i^n)$ is within $\eps_0$ of $H_{t_i}^+$ (resp.\ $H_{t_i}^-$),
    and
    \item for all $j=1,\dots,k'$ and for any $n \neq 0$, $U_1(u_j^{\pm n})$ are within $\eps_0$ of $u_j^+$.
\end{itemize}

By taking powers of the generators and slightly expanding the ping-pong neighborhoods if needed, we may assume that $\eps_0$ is sufficiently small so that the $A_i^\pm$ and $B_i^\pm$ contain the $2\eps_0$-neighborhoods of the $t_i^\pm$ and $H_{t_i}^\pm$ respectively, and the $C_j^+$ and $C_j^-$ contain the $2\eps_0$-neighborhoods of the $u_j^+$ and $H_{u_j}^+$ respectively. 
This slight strengthening of ping-pong will be useful for establishing the transversality of our limit maps below.

Below, we replace $\Gamma$ by the free subgroup generated by these powers.

Let $\mathcal{P} = \left\{\langle u_1 \rangle, \dots, \langle u_{k'} \rangle \right\}$.
Then $\Gamma$ is hyperbolic relative to $\mathcal{P}$ and there are continuous $\Gamma$-equivariant homeomorphisms $\xi, \xi^*$ from the Bowditch boundary $\del(\Gamma,\mathcal{P})$ 
to the limit set $\Lambda_{\Gamma} \subset \proj(\real^d)$ and the dual limit set $\Lambda_{\Gamma}^* \subset \proj(\real^{d*})$ given by 
\begin{align*}
\lim_n \gamma_n \mapsto \lim_n U_1(\gamma_n) && \text{and} && \lim_n \gamma_n \mapsto \lim_n U_{d-1}(\gamma_n) 
\end{align*}
respectively (cf.\ \cite[Prop.\ A.5]{CLSS}).

By definition, $\xi$ and $\xi^*$ are dynamics-preserving.

We claim that $\xi$ and $\xi^*$ are transverse: given two distinct points $x = \lim \gamma_n$ and $y = \lim \eta_n$ in $\del(\Gamma,\mathcal{P})$, we have $\xi(x) \notin \xi^*(y)$ --- the latter considered as a projective hyperplane in $\proj(\real^d)$ --- using ping-pong and the following 
\begin{lem}[{\cite[Lem.\,5.8]{GGKW}; \cite[Lem.\,A.5]{BPS}}]
If $A, B \in \GL(d,\real)$ are such that $\sigma_p(A)> \sigma_{p+1}(A)$ and $\sigma_p(AB)> \sigma_{p+1}(AB)$, then
\[ d\left(B\cdot U_p(A), U_p(BA) \right) \leq \frac{\sigma_1}{\sigma_d}(B) \cdot \frac{\sigma_{p+1}}{\sigma_p}(A) .\]
\end{lem}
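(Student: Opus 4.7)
The plan is to reduce to a direct SVD estimate, without invoking any perturbation-theoretic / sin--theta machinery. Set $V := B \cdot U_p(A)$ and $W := U_p(BA)$; the goal is to bound the sine of the largest principal angle between these two $p$-subspaces of $\real^d$, which is the natural Grassmannian distance.

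Writing $A = K_A D_A L_A$ in SVD, the source of $A$ splits orthogonally as $E \oplus E^\perp$, where $E := L_A^{-1} \langle e_1, \ldots, e_p \rangle$ is the top right-singular subspace of $A$: then $A$ sends $E$ into $U_p(A)$ with every restricted singular value $\geq \sigma_p(A)$, and $E^\perp$ into $U_p(A)^\perp$ with operator norm $\leq \sigma_{p+1}(A)$. I would then fix an arbitrary unit $y \in W$ and choose the unique $x$ in the analogous top right-singular subspace of $BA$ with $BA\,x = y$; since $BA$ expands every vector in this $p$-subspace by at least $\sigma_p(BA)$, we have $\|x\| \leq 1/\sigma_p(BA) \leq 1/(\sigma_d(B)\,\sigma_p(A))$, where the last inequality is the min--max bound for $\sigma_p(BA)$ tested on the $p$-subspace $E$ (on which $A$ expands by at least $\sigma_p(A)$ and $B$ then by at least $\sigma_d(B)$). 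Decomposing $x = x_1 + x_2$ with $x_1 \in E$ and $x_2 \in E^\perp$, the key observation is that $BA\,x_1 = B(A x_1) \in B \cdot U_p(A) = V$, so the component of $y = BA\,x$ perpendicular to $V$ comes entirely from $BA\,x_2$, and is controlled by $\|BA\,x_2\| \leq \sigma_1(B)\,\sigma_{p+1}(A)\,\|x\|$. Dividing the two estimates gives $\sin \angle (y, V) \leq (\sigma_1/\sigma_d)(B) \cdot (\sigma_{p+1}/\sigma_p)(A)$.

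Since $y \in W$ is arbitrary, this bounds every principal angle between $V$ and $W$, as required. The only thing to watch out for is the bookkeeping between the source and the target of $A$, so that $U_p$ (living in the target) is not confused with $E$ (living in the source); there is no serious technical obstacle. A pleasant feature of this direct approach is that it yields the clean ratio $(\sigma_1/\sigma_d)(B) \cdot (\sigma_{p+1}/\sigma_p)(A)$ without any spurious subtraction in the denominator --- such a subtraction would appear if one instead applied a generic Davis--Kahan sin--theta perturbation theorem to the decomposition $BA = B K_A D_A^+ L_A + B K_A D_A^- L_A$, where $D_A^\pm$ are the top-$p$ and bottom-$(d-p)$ diagonal parts of $D_A$.
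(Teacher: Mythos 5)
The paper itself gives no proof of this lemma: it is stated with citations to \cite{GGKW} and \cite{BPS} and used as a black box, so there is nothing internal to compare your argument against. Your proof is correct. The key steps all check out: (i) $\sigma_p(BA) \geq \sigma_d(B)\,\sigma_p(A)$ follows from Courant--Fischer tested on the top-$p$ right-singular subspace $E$ of $A$; (ii) for a unit $y \in U_p(BA)$ with preimage $x$ in the top right-singular subspace of $BA$, decomposing $x = x_1 + x_2$ along $E \oplus E^\perp$ gives $BAx_1 \in B\cdot U_p(A)$ exactly, so the $V$-orthogonal component of $y$ is controlled by $\|BAx_2\| \leq \sigma_1(B)\,\sigma_{p+1}(A)\,\|x\|$; and (iii) since $\dim V = \dim W = p$ (using invertibility of $B$), taking the supremum over unit $y \in W$ of $\|(I - P_V)y\|$ does recover the sine of the largest principal angle, which is the Grassmannian distance intended here. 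One phrasing nit: you say this ``bounds every principal angle'' --- what the supremum argument literally bounds is the \emph{largest} principal angle (equivalently $\|(I-P_V)P_W\| = \|P_V - P_W\|$), which of course dominates the others, so the conclusion is unaffected. The approach is essentially the same SVD-splitting argument that appears in \cite[Lem.\,A.5]{BPS}; your closing remark correctly identifies why this direct route avoids the spurious spectral-gap denominator that a generic Davis--Kahan bound would introduce.
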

To establish the claim: write $\gamma_n = g_1 \cdots g_n$ and $\eta_n = h_1 \dots h_n$. Pick $n_0$ minimal such that $U_1(\gamma_{n_0})$ and $U_1(\eta_{n_0})$ are in different ping-pong sets. The lemma above implies that for any given $\eps >0$, there exists some $n_1$ so that for all $n \geq n_1$, $U_1(\gamma_n) = U_1(g_1 \cdots g_n)$ is $\eps$-close to $\gamma_{n_0} \cdot U_1(g_{n_0+1} \cdots g_n)$, and $U_{d-1}(\eta_n)$ is $\eps$-close to $\eta_{n_0} \cdot U_{d-1}(h_{n_0+1} \cdots  h_n)$. By our ping-pong setup, for sufficiently small $\eps$ these are uniformly close to $U_1(\gamma_{n_0})$ and $U_{d-1}(\eta_{n_0})$ respectively, and in particular they are transverse to each other.

Moreover, the inclusion $\iota\colon \Gamma \into \PGL(d,\real)$ satisfies (D$\pm$); 
the proof of this claim will not require the strengthened version of ping-pong described above.

(D+) is immediate from $\Gamma$ being finitely-generated, the existence of a polynomial $\bar{q}$ of degree $d-1$ such that  $\frac{\sigma_1}{\sigma_d}(u) \leq \bar{q}(|u|)$ for every unipotent element $u \in \Gamma$, and the sub-multiplicativity of the first singular value $\sigma_1$.

To obtain (D$-$), one can use the following 
\begin{lem}[{\cite[Lem.\,A.7]{BPS}}] \label{lem:BPSA7}
If $A, B \in \GL(d,\real)$ are such that $\sigma_p(A)> \sigma_{p+1}(A)$ and $\sigma_p(AB)> \sigma_{p+1}(AB)$, then
\begin{align*}
\sigma_p(AB) & \geq (\sin\alpha) \cdot \sigma_p(A)\, \sigma_p(B) \\
\sigma_{p+1}(AB) & \leq (\sin\alpha)^{-1} \sigma_{p+1}(A)\, \sigma_{p+1}(B)
\end{align*}
where $\alpha := \angle \left( U_p(B), S_{d-p}(A)\right)$.
\end{lem}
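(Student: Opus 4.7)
The plan is to combine the Courant--Fischer min-max characterizations
\[ \sigma_p(M) = \max_{\dim V = p} \min_{v \in V,\, \|v\|=1} \|Mv\|, \qquad \sigma_{p+1}(M) = \min_{\dim V = d-p} \max_{v \in V,\, \|v\|=1} \|Mv\|, \]
with a geometric analysis of how $A$ acts on vectors of the form $Bu$, where $u$ ranges over a suitable $p$-plane. Throughout I will adopt the convention that $\alpha = \angle(V,W)$ is the smallest principal angle, equivalently $\sin\alpha = \min_{v \in V,\, \|v\|=1} \|P_{W^\perp}(v)\|$, where $P_{W^\perp}$ denotes orthogonal projection.

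For the first inequality I would take as witness subspace $V := B^{-1}(U_p(B))$, which is $p$-dimensional because $B$ is invertible and on which $B$ has all singular values at least $\sigma_p(B)$. For any unit $u \in V$, decompose $Bu = w_T + w_S$ orthogonally with $w_S \in S_{d-p}(A)$ and $w_T \in S_{d-p}(A)^\perp$. Since $Bu \in U_p(B)$, the angle hypothesis gives $\|w_T\| \geq \sin\alpha \cdot \|Bu\| \geq \sin\alpha \cdot \sigma_p(B)$. Because $A$ sends $S_{d-p}(A)^\perp$ into $U_p(A)$ and $S_{d-p}(A)$ into $U_p(A)^\perp$, the vectors $Aw_T$ and $Aw_S$ are orthogonal, so
\[ \|ABu\|^2 \;=\; \|Aw_T\|^2 + \|Aw_S\|^2 \;\geq\; \sigma_p(A)^2 \|w_T\|^2 \;\geq\; \sin^2\alpha \cdot \sigma_p(A)^2\, \sigma_p(B)^2 . \]
Feeding $V$ into the first min-max formula yields $\sigma_p(AB) \geq \sin\alpha \cdot \sigma_p(A)\,\sigma_p(B)$.

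The second inequality I would obtain by a duality trick applied to the first. Using the identities $\sigma_i(M^{-1}) = \sigma_{d-i+1}(M)^{-1}$ and $S_i(g) = U_i(g^{-1})$, applying the just-proved first inequality to the pair $(B^{-1}, A^{-1})$ at level $d-p$ gives
\[ \sigma_{d-p}(B^{-1} A^{-1}) \;\geq\; \sin \angle\!\bigl(U_{d-p}(A^{-1}),\, S_p(B^{-1})\bigr) \cdot \sigma_{d-p}(B^{-1})\, \sigma_{d-p}(A^{-1}), \]
and the relevant angle simplifies to $\angle\!\bigl(S_{d-p}(A),\, U_p(B)\bigr) = \alpha$. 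Rewriting back in terms of the original matrices turns this into the desired upper bound $\sigma_{p+1}(AB) \leq (\sin\alpha)^{-1} \sigma_{p+1}(A)\,\sigma_{p+1}(B)$.

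The only delicate point rather than a genuine obstacle is pinning down the angle convention: once $\sin\alpha$ is interpreted as a minimum over unit vectors in $U_p(B)$ of the length of the orthogonal projection onto $S_{d-p}(A)^\perp$, the projection estimate used in the first inequality is immediate, and everything else reduces to routine linear algebra on orthogonal decompositions adapted to the singular value decomposition of $A$.
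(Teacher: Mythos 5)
The paper does not prove this lemma: it is quoted verbatim from Bochi--Potrie--Sambarino \cite[Lem.\,A.7]{BPS}, so there is no in-paper argument to compare against. Your proof is correct and is essentially the standard one: the Courant--Fischer characterization $\sigma_p(M) = \max_{\dim V=p}\min_{v\in V,\,\|v\|=1}\|Mv\|$ with the witness $V=B^{-1}(U_p(B))$, the orthogonal splitting $\real^d = S_{d-p}(A)^\perp\oplus S_{d-p}(A)$ adapted to the SVD of $A$, and an inversion trick for the second bound.

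One point you gloss over slightly: in the duality step you pass from $\angle\bigl(U_{d-p}(A^{-1}),\,S_p(B^{-1})\bigr)=\angle\bigl(S_{d-p}(A),\,U_p(B)\bigr)$ to the quantity $\alpha=\angle\bigl(U_p(B),\,S_{d-p}(A)\bigr)$, i.e.\ you swap the two arguments of the angle. Under your convention $\sin\angle(V,W)=\min_{v\in V,\,\|v\|=1}\|P_{W^\perp}v\|$ this symmetry is not definitionally obvious. It does hold here because $U_p(B)$ and $S_{d-p}(A)$ have complementary dimensions and are transverse; writing $V$ as the graph of a map $T\colon W^\perp\to W$ one finds both quantities equal $(1+\|T\|^2)^{-1/2}$. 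It would be worth a sentence acknowledging this, but it is a standard fact and does not affect the correctness of the argument. Also, to be safe one should note that applying the already-proved first inequality to the pair $(B^{-1},A^{-1})$ at index $d-p$ requires the gap $\sigma_{d-p}(B^{-1})>\sigma_{d-p+1}(B^{-1})$, i.e.\ $\sigma_p(B)>\sigma_{p+1}(B)$; this is implicit in the statement already, since otherwise $U_p(B)$, and hence $\alpha$, would not be well defined.
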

To use this here, we show that there exists a uniform constant $\alpha_0 > 0$ such that whenever $(\gamma_n = g_1 \cdots g_n)_{n\in\nats} \subset \Gamma$ is a sequence converging to a point in $\del(\Gamma,\mathcal{P})$, where each $g_i$ is a power of a generator and $g_i$ and $g_j$ are not powers of a a common generator whenever $|i-j|=1$, then $\angle \left(U_p(g_1 \cdots g_{i-1}), S_{d-p}(g_i) \right) \geq \alpha_0$ for $p\in\{1,d-1\}$ and for all $n$.

Suppose this were not true, so that
there exist 
\begin{itemize}
    \item a generator $s$, 
    \item a divergent sequence $(k_n)$ of integers, and 
    \item a divergent sequence of words $(w_n)$ of words in $\Gamma$ not starting in $s^{\pm1}$, which without loss of generality --- passing to a subsequence if needed --- converge to some point in $\del(\Gamma,\mathcal{P})$,
\end{itemize}
such that 
\[ \angle(U_1(\rho(w_n)), S_{d-1}(\rho(s^{k_n})) ) \leq 2^{-n} ;\]
then, in the limit, we obtain \[ \angle \left( \lim_{n\to\infty} U_1(\rho(w_n)), \lim_{n\to\infty} S_{d-1}(\rho(s^{k_n})) \right) = 0\]
but this contradicts transversality, since, by our hypothesis that none of the words $w_n$ starts with $s$, we must have $\lim w_n \neq \lim s^{k_n}$ as $n\to\infty$.

Thus we do have a uniform lower bound $\alpha_0 \leq \alpha$ as desired, and then Lemma \ref{lem:BPSA7}, together with the existence of a proper polynomial $\ubar{q}$ such that
\[ \frac{\sigma_1}{\sigma_2}(u_j^n) \geq \ubar{q}(n) \]
for all $j$,
tells us that $\log\frac{\sigma_p}{\sigma_{p+1}} (\gamma)$ grows at least linearly in $|\gamma|_c$, which gives us (D$-$).

We then conclude, by Theorem \ref{thm:gaps+maps}, that $\iota\colon \Gamma \into \PGL(d,\real)$ is $P_1$-dominated relative to $\mathcal{P}$.
\end{eg}

\printbibliography

@article{BPS,
    AUTHOR = {Bochi, Jairo and Potrie, Rafael and Sambarino, Andr\'{e}s},
     TITLE = {Anosov representations and dominated splittings},
   JOURNAL = {J. Eur. Math. Soc. (JEMS)},
  FJOURNAL = {Journal of the European Mathematical Society (JEMS)},
    VOLUME = {21},
      YEAR = {2019},
    NUMBER = {11},
     PAGES = {3343--3414},
   MRCLASS = {22E40 (20F67 37B99 37D30 53C35)},
  MRNUMBER = {4012341},
       DOI = {10.4171/JEMS/905},
}

@article{GGKW,
shorthand = "GGKW",
author = "Guéritaud, François and Guichard, Olivier and Kassel, Fanny and Wienhard, Anna",
doi = "10.2140/gt.2017.21.485",
fjournal = "Geometry & Topology",
journal = "Geom. Topol.",
number = "1",
pages = "485--584",
publisher = "MSP",
title = "Anosov representations and proper actions",
volume = "21",
year = 2017
}

@article{KLP,
	title = {Anosov subgroups: dynamical and geometric characterizations},
	volume = {3},
	doi = {10.1007/s40879-017-0192-y},
	number = {4},
	journal = {Eur. J. Math.},
	author = {Kapovich, Michael and Leeb, Bernhard and Porti, Joan},
	year = {2017},
	mrnumber = {3736790},
	pages = {808--898}
}

@book{osinRH,
title={Relatively Hyperbolic Groups: Intrinsic Geometry, Algebraic Properties, and Algorithmic Problems},
author={Osin, Denis V.},
number={v. 179, no. 843},
isbn={9780821838211},
lccn={2005053663},
series={American Mathematical Society},
year={2006},  
publisher={American Mathematical Society}
}

@Article{GrovesManning,
author={{Groves}, Daniel and {Manning}, Jason F.},
title="Dehn filling in relatively hyperbolic groups",
journal="Israel J.  Math.",
year="2008",
volume="168",
number="1",
pages="317",
abstract="We introduce a number of new tools for the study of relatively hyperbolic groups. First, given a relatively hyperbolic group G, we construct a nice combinatorial Gromov hyperbolic model space acted on properly by G, which reflects the relative hyperbolicity of G in many natural ways. Second, we construct two useful bicombings on this space. The first of these, preferred paths, is combinatorial in nature and allows us to define the second, a relatively hyperbolic version of a construction of Mineyev.",
doi="10.1007/s11856-008-1070-6",
}

@article{Bowditch,
author = {{Bowditch}, Brian H.},
title = {Relatively hyperbolic groups},
journal = {International Journal of Algebra and Computation},
volume = {22},
number = {03},
pages = {1250016},
year = {2012},
doi = {10.1142/S0218196712500166},
}

@article{CLSS,
    AUTHOR = {Canary, Richard D. and Lee, Michelle and Stover, Matthew},
     TITLE = {Amalgam {A}nosov representations},
      NOTE = {With an appendix by Canary, Lee, Andr\'{e}s Sambarino and Stover},
   JOURNAL = {Geom. Topol.},
  FJOURNAL = {Geometry \& Topology},
    VOLUME = {21},
      YEAR = {2017},
    NUMBER = {1},
     PAGES = {215--251},
   MRCLASS = {37D20 (20F67 37B05 37F30 57M50)},
       DOI = {10.2140/gt.2017.21.215},
}

@article{Groff,
Author = {Groff, Bradley W.},
Title = {Quasi-isometries, boundaries and {JSJ}-decompositions of relatively hyperbolic groups},
FJournal = {{Journal of Topology and Analysis}},
Year = 2013,
Volume = 5,
Number = 4,
pages = {451-475},
Month = dec,
Publisher = {{World Scientific}},
DOI = {10.1142/S1793525313500192},
Journal = {{J. Topol. Anal.}},
}

@online{KL,
   author = {{Kapovich}, Michael and {Leeb}, Bernhard},
    title = "{Relativizing characterizations of Anosov subgroups, I}",
  eprinttype = {arXiv},
   eprint = {1807.00160},
 primaryClass = "math.GR",
    year = 2018,
    month = jun,
}

@online{KasselPotrie,
    title={Eigenvalue gaps for hyperbolic groups and semigroups},
    author={Fanny Kassel and Rafael Potrie},
    year=2020,
    month=feb,
    eprinttype = {arXiv},
    eprint={2002.07015},
    primaryClass={math.DS}
}

@online{reldomreps,
       author = {{Zhu}, Feng},
        title = {Relatively dominated representations},
         year = 2019,
        month = dec,
    eprinttype = {arXiv},
       eprint = {1912.13152},
 primaryClass = {math.GR},
       adsurl = {https://ui.adsabs.harvard.edu/abs/2019arXiv191213152Z},
      adsnote = {Provided by the SAO/NASA Astrophysics Data System}
}

@online{Kostas_AnosovWEG,
    title={Anosov representations, strongly convex cocompact groups and weak eigenvalue gaps},
    author={Konstantinos Tsouvalas},
    year=2020,
    month=aug,
    eprint={2008.04462},
    archivePrefix={arXiv},
    primaryClass={math.GT}
}

@article {GP_FloydRH,
    AUTHOR = {Gerasimov, Victor and Potyagailo, Leonid},
     TITLE = {Quasi-isometric maps and {F}loyd boundaries of relatively
              hyperbolic groups},
   JOURNAL = {J. Eur. Math. Soc. (JEMS)},
  FJOURNAL = {Journal of the European Mathematical Society (JEMS)},
    VOLUME = {15},
      YEAR = {2013},
    NUMBER = {6},
     PAGES = {2115--2137},
   MRCLASS = {20F67},
       DOI = {10.4171/JEMS/417},
}

@book {CDP,
    AUTHOR = {Coornaert, Michel and Delzant, Thomas and Papadopoulos, Athanase},
     TITLE = {G\'{e}om\'{e}trie et th\'{e}orie des groupes},
    SERIES = {Lecture Notes in Mathematics},
    VOLUME = {1441},
      NOTE = {Les groupes hyperboliques de Gromov. [Gromov hyperbolic
              groups],
              With an English summary},
 PUBLISHER = {Springer-Verlag, Berlin},
      YEAR = {1990},
     PAGES = {x+165},
      ISBN = {3-540-52977-2},
}

@article {GerFloyd,
    AUTHOR = {Gerasimov, Victor},
     TITLE = {Floyd maps for relatively hyperbolic groups},
   JOURNAL = {Geom. Funct. Anal.},
  FJOURNAL = {Geometric and Functional Analysis},
    VOLUME = {22},
      YEAR = {2012},
    NUMBER = {5},
     PAGES = {1361--1399},
DOI = {10.1007/s00039-012-0175-6},
}

@article {Ivan_thickFloyd,
    AUTHOR = {Levcovitz, Ivan},
     TITLE = {Thick groups have trivial {F}loyd boundary},
   JOURNAL = {Proc. Amer. Math. Soc.},
  FJOURNAL = {Proceedings of the American Mathematical Society},
    VOLUME = {148},
      YEAR = {2020},
    NUMBER = {2},
     PAGES = {513--521},
MRCLASS = {20F65 (57M07)},
  MRNUMBER = {4052190},
MRREVIEWER = {Sebastian Wolfgang Hensel},
       DOI = {10.1090/proc/14745},
}

@online{Max,
    title={A quantified local-to-global principle for Morse quasigeodesics}, 
    author={Max Riestenberg},
    year = 2021,
    month = jan,
    archivePrefix={arXiv},
    eprint = {2101.07162},
    primaryClass = {math.DG},
}

@article {Karlsson,
    AUTHOR = {Karlsson, Anders},
     TITLE = {Free subgroups of groups with nontrivial {F}loyd boundary},
   JOURNAL = {Comm. Algebra},
  FJOURNAL = {Communications in Algebra},
    VOLUME = {31},
      YEAR = {2003},
    NUMBER = {11},
     PAGES = {5361--5376},
       DOI = {10.1081/AGB-120023961},
}

@article {Floyd,
    AUTHOR = {Floyd, William J.},
     TITLE = {Group completions and limit sets of {K}leinian groups},
   JOURNAL = {Invent. Math.},
  FJOURNAL = {Inventiones Mathematicae},
    VOLUME = {57},
      YEAR = {1980},
    NUMBER = {3},
     PAGES = {205--218},
MRCLASS = {57M15 (22E40 30F40 51M20)},
  MRNUMBER = {568933},
MRREVIEWER = {I. Kra},
       DOI = {10.1007/BF01418926},
}

@incollection {Gromov,
    AUTHOR = {Gromov, M.},
     TITLE = {Hyperbolic groups},
 BOOKTITLE = {Essays in group theory},
    SERIES = {Math. Sci. Res. Inst. Publ.},
    VOLUME = {8},
     PAGES = {75--263},
 PUBLISHER = {Springer, New York},
      YEAR = {1987},
   MRCLASS = {20F32 (20F06 20F10 22E40 53C20 57R75 58F17)},
  MRNUMBER = {919829},
MRREVIEWER = {Christopher W. Stark},
       DOI = {10.1007/978-1-4613-9586-7_3},
}

@article {KLP_Morse,
    AUTHOR = {Kapovich, Michael and Leeb, Bernhard and Porti, Joan},
     TITLE = {A {M}orse lemma for quasigeodesics in symmetric spaces and
              {E}uclidean buildings},
   JOURNAL = {Geom. Topol.},
  FJOURNAL = {Geometry \& Topology},
    VOLUME = {22},
      YEAR = {2018},
    NUMBER = {7},
     PAGES = {3827--3923},
       DOI = {10.2140/gt.2018.22.3827},
}

@article {DGO,
    AUTHOR = {Dahmani, Fran\c{c}ois and Guirardel, Vincent and Osin, Denis},
     TITLE = {Hyperbolically embedded subgroups and rotating families in
              groups acting on hyperbolic spaces},
   JOURNAL = {Mem. Amer. Math. Soc.},
  FJOURNAL = {Memoirs of the American Mathematical Society},
    VOLUME = {245},
      YEAR = {2017},
    NUMBER = {1156},
     PAGES = {v+152},
      ISBN = {978-1-4704-2194-6},
       DOI = {10.1090/memo/1156},
}

@misc{ZZ,
      title={Relatively Anosov representations via flows I: Theory}, 
      author={Feng Zhu and Andrew Zimmer},
      year={2022},
      note={Preprint to appear}
}

\end{document}